\newtheorem{Theorem}{Theorem}[section]
\newtheorem{Lemma}[Theorem]{Lemma}
\newtheorem{definition}[Theorem]{Definition}
\newtheorem{Proposition}[Theorem]{Proposition}
\newtheorem{Corollary}[Theorem]{Corollary}
\definecolor{ColBlack}{RGB}{0,0,0} 
\definecolor{ColWhite}{RGB}{255,255,255} 
\definecolor{Col1}{RGB}{133,6,6} 
\definecolor{Col2}{RGB}{198,8,0} 
\definecolor{Col3}{RGB}{174,74,52} 
\definecolor{Col4}{RGB}{103,113,121} 
\definecolor{Col5}{RGB}{90,94,107} 
\definecolor{Col6}{RGB}{70,63,50} 
\definecolor{Col7}{RGB}{0,0,255} 
\definecolor{Col8}{RGB}{24,74,217} 
\definecolor{Col9}{RGB}{25,170,229} 
\newcommand{\Hide}[1]{\textcolor{Col4}{\tt [hidden]}}
\newcommand{\Def}[1]{\textcolor{ColBlack}{\em #1}}
\DeclareRobustCommand{\gobblefive}[5]{}
\let\SavedCaption=\caption
\renewcommand*{\caption}[2][\shortcaption]{%
    \def\shortcaption{#2}
    \SavedCaption[\; #1]{#2}}
\let\SavedParagraph=\paragraph
\renewcommand{\paragraph}[1]{%
    \SavedParagraph{\it #1}}
\newcommand*\ClearToLeftPage{%
    \clearpage
    \ifodd\value{page}
        \hbox{}
        \vspace*{\fill}
        \thispagestyle{empty}
        \newpage
    \fi
}
\numberwithin{equation}{section} 
\renewcommand{\leq}{\leqslant}
\renewcommand{\geq}{\geqslant}
\newcommand{\ip}{\mathcal{IP}_{n}}
\newcommand{\cc}{\mathcal{CC}_{n}}
\newcommand{\dit}{\mathcal{TID}_{n}}
\newcommand{\itam}{\mathcal{TI}_{n}}
\newcommand{\ccs}{\mathcal{CC}^{sync}_{n}}
\title{Cubic realizations of Tamari interval lattices}
\author{Camille Combe\addressmark{1}}
\address{\addressmark{1}Institut de Recherche Mathématique Avancée 
UMR 7501, Université de Strasbourg et CNRS, France.}
\abstract{We introduce cubic coordinates, which are integer words encoding intervals in the Tamari lattices.
Cubic coordinates are in bijection with interval-posets, themselves known to be in bijection with Tamari intervals.
We show that in each degree the set of cubic coordinates forms a lattice, isomorphic to the lattice of Tamari intervals. Geometric realizations are naturally obtained by placing cubic coordinates in space, highlighting some of their properties. Finally, we consider the cellular structure of these realizations.}
\keywords{Tamari lattices, Tamari intervals, 
interval-posets, posets, geometric realizations, cubical complexes.}
\begin{document}

\maketitle

\section*{Introduction}

The Tamari lattices are partial orders having extremely rich combinatorial and algebraic properties. These partial orders are defined on the set of binary trees and rely on the rotation operation~\cite{Tam62}.
We are interested in the intervals of these lattices, meaning the pairs of comparable binary trees. Tamari intervals of size $n$ also form a lattice. The number of these objects is given by a formula that was proved by Chapoton~\cite{Cha06}:
\begin{equation*}
\frac{2(4n+1)!}{(n+1)!(3n+2)!}.
\end{equation*}
Strongly linked with associahedra, Tamari lattices have been recently generalized in many ways~\cite{BPR12,PRV17}. In this process, the number of intervals of these generalized lattices have also been enumerated through beautiful formulas~\cite{BMFPR12,FPR17}.
Many bijections between Tamari intervals of size $n$ and other combinatorial objects are known. For instance, a bijection with planar triangulations is presented by Bernardi and Bonichon in~\cite{BB09}. It has been proved by Châtel and Pons that Tamari intervals are in bijection with interval-posets of the same size~\cite{CP15}.

We provide in this paper a new bijection with Tamari intervals, which is inspired by interval-posets. 
More precisely, we first build two words of size $n$ from the Tamari diagrams~\cite{Pal86} of a binary tree. Then, if they satisfy a certain property of compatibility, we build a Tamari interval diagram from these two words. 
We show that Tamari interval diagrams and interval-posets are in bijection.
Then, we propose a new encoding of Tamari intervals, by building $(n-1)$-tuples of numbers from Tamari interval diagrams. These tuples we refer to as cubic coordinates. 
This new coding has two obvious virtues: it is very 
compact and it 
gives a way of comparing in a simple manner two Tamari intervals, through a fast algorithm. 
On the other hand, some properties of Tamari intervals translate nicely in the setting of cubic coordinates. For instance, synchronized Tamari intervals~\cite{PRV17} become cubic coordinates with no zero entry.
Besides, cubic coordinates provide naturally a geometric realization of the lattice of Tamari intervals, by seeing them as space coordinates. Indeed, all cubic coordinates of size $n$ can be placed in the space $\mathbb{R}^{n-1}$. By drawing their cover relations, we obtain an oriented graph. This gives us a realization of cubic coordinate lattices, which we call cubic realization. This realization leads us to many questions, in particular about the cells it contains.

This paper is organized as follows.
In Section~\ref{Part2}, we define Tamari interval diagrams from Tamari diagrams. Then we give a bijection between the set of Tamari interval diagrams and the set of interval-posets. 
Subsequently, we define in Section~\ref{Part3} cubic coordinates, and build a bijection between these and Tamari interval diagrams.
By using these two bijections, and after endowing the set of cubic coordinates with a partial order structure, we show in Section~\ref{Part4} that there is an isomorphism of posets between cubic coordinate posets and Tamari interval lattices. 
Finally, the cubic realization and the cells it contains make the subject of Section~\ref{Part5}. We also show here how to associate a synchronized cubic coordinate with each cell.

Some proofs of the presented results are omitted and
some others are sketched in this extended abstract.

{\bf Notations.} Throughout this article, for all words $u$, we denote by $u_i$ the $i$-th letter of $u$. We use the notation $[n]$ to denote the set $\{1, \dots, n\}$.

\section{Tamari interval diagrams}\label{Part2}

In this section, we recall the definition of Tamari diagrams \cite{Pal86} and generalize this notion in order to define Tamari interval diagrams. Then, we establish a bijection between the set of Tamari interval diagrams and the set of interval-posets.
\smallbreak

\begin{definition}\label{diagTam}
A \Def{Tamari diagram} is a word $u=u_1u_2\dots u_n$ of integers
such that
\begin{enumerate}[label=(\roman*)]
\item $0\leq u_i \leq n - i$ for all $i\in [n]$;
\item  \label{condDT2} $u_{i+j} \leq u_i - j$ for all $i\in [n]$ and $0\leq j\leq u_i$.
\end{enumerate}
The size of a Tamari diagram is its number of letters.
\end{definition}
For instance, the fourteen Tamari diagrams of size $4$
are
$$
0000, 0010, 0100, 0200, 0210, 1000, 1010, 
2000, 2100, 3000, 3010, 3100, 3200, 3210.
$$
The set of Tamari diagrams of size $n$ is in bijection with the set of binary trees of the same size~\cite{Pal86}.
In the sequel, we need to encode a pair of binary trees with $n$ nodes by two words of size $n$. To this aim,
we introduce here dual Tamari diagrams. The first binary 
tree of the pair is encoded by its Tamari diagram and the
second is encoded by its dual Tamari diagram.

\begin{definition}\label{diagTamDual}
A \Def{dual Tamari diagram} is a word $v=v_1v_2\dots v_n$ of integers such that
\begin{enumerate}[label=(\roman*)]
\item \label{condDTD1} $0\leq v_i \leq i - 1$ for all $i\in [n]$;
\item \label{condDTD2} $v_{i-j} \leq v_i - j$ for all $i\in [n]$ and $0\leq j\leq v_i$.
\end{enumerate}
The size of a dual Tamari diagram is its number of letters.
\end{definition}

A word $v = v_1v_2\dots v_n$ is a dual Tamari diagram if
and only if its reversal is a Tamari diagram.
We can use for Tamari diagrams and dual Tamari diagrams the graphical representation proposed in~\cite{Gir11}. The value of a letter of both diagrams gives the height of the corresponding column. Condition~\ref{condDT2} of Definition~\ref{diagTam} (resp. \ref{condDTD2} of Definition~\ref{diagTamDual}) translates in the following way on the drawing: from each column, one draws a dotted line of slope $-1$ (resp. $1$), and the column
to its right (resp. its left) 
must not cross this line.
An example is given in Figure~\ref{dtdtd}.

\definecolor{wrwrwr}{rgb}{0.3803921568627451,0.3803921568627451,0.3803921568627451}\definecolor{rvwvcq}{rgb}{0.08235294117647059,0.396078431372549,0.7529411764705882}\definecolor{cqcqcq}{rgb}{0.7529411764705882,0.7529411764705882,0.7529411764705882}
\definecolor{ffqqqq}{rgb}{1,0,0}\definecolor{qqqqff}{rgb}{0,0,1}

\begin{figure}[h!]
\centerline{\scalebox{0.7}{\begin{tikzpicture}[line cap=round,line join=round,>=triangle 45,x=1cm,y=1cm]\clip(-7.8,-0.4) rectangle (6.,5.);
\draw [line width=1.2pt] (0,0)-- (5,0);
\draw [line width=1.2pt,color=ffqqqq] (4.5,0)-- (4.5,0.5);
\draw [line width=1.2pt,color=ffqqqq] (4.5,0.5)-- (5,0.5);
\draw [line width=1.2pt,color=ffqqqq] (5,0.5)-- (5,0);
\draw [line width=1.2pt,color=ffqqqq] (3,0)-- (3,0.5);
\draw [line width=1.2pt,color=ffqqqq] (3,0.5)-- (3,1);
\draw [line width=1.2pt,color=ffqqqq] (3,1)-- (3,1.5);
\draw [line width=1.2pt,color=ffqqqq] (3,1.5)-- (3,2);
\draw [line width=1.2pt,color=ffqqqq] (3,2)-- (2.5,2);
\draw [line width=1.2pt,color=ffqqqq] (2.5,2)-- (2.5,1.5);
\draw [line width=1.2pt,color=ffqqqq] (2.5,1.5)-- (2.5,1);
\draw [line width=1.2pt,color=ffqqqq] (2.5,1)-- (2.5,0.5);
\draw [line width=1.2pt,color=ffqqqq] (2.5,0.5)-- (2.5,0);
\draw [line width=1.2pt,color=ffqqqq] (2.5,0.5)-- (3,0.5);
\draw [line width=1.2pt,color=ffqqqq] (2.5,1)-- (3,1);
\draw [line width=1.2pt,color=ffqqqq] (2.5,1.5)-- (3,1.5);
\draw [line width=1.2pt,color=ffqqqq] (1,0)-- (1,0.5);
\draw [line width=1.2pt,color=ffqqqq] (1,0.5)-- (1.5,0.5);
\draw [line width=1.2pt,color=ffqqqq] (1.5,0.5)-- (1.5,0);
\draw [line width=1.2pt] (-7,0)-- (-2,0);
\draw [line width=1.2pt,color=qqqqff] (-7,0)-- (-7,0.5);
\draw [line width=1.2pt,color=qqqqff] (-7,0.5)-- (-7,1);
\draw [line width=1.2pt,color=qqqqff] (-7,1)-- (-7,1.5);
\draw [line width=1.2pt,color=qqqqff] (-7,1.5)-- (-7,2);\draw [line width=1.2pt,color=qqqqff] (-7,2)-- (-7,2.5);\draw [line width=1.2pt,color=qqqqff] (-7,2.5)-- (-7,3);\draw [line width=1.2pt,color=qqqqff] (-7,3)-- (-7,3.5);\draw [line width=1.2pt,color=qqqqff] (-7,3.5)-- (-7,4);\draw [line width=1.2pt,color=qqqqff] (-7,4)-- (-7,4.5);\draw [line width=1.2pt,color=qqqqff] (-7,4.5)-- (-6.5,4.5);\draw [line width=1.2pt,color=qqqqff] (-6.5,4.5)-- (-6.5,4);\draw [line width=1.2pt,color=qqqqff] (-6.5,4)-- (-6.5,3.5);\draw [line width=1.2pt,color=qqqqff] (-6.5,3.5)-- (-6.5,3);\draw [line width=1.2pt,color=qqqqff] (-6.5,3)-- (-6.5,2.5);\draw [line width=1.2pt,color=qqqqff] (-6.5,2.5)-- (-6.5,2);\draw [line width=1.2pt,color=qqqqff] (-6.5,2)-- (-6.5,1.5);\draw [line width=1.2pt,color=qqqqff] (-6.5,1.5)-- (-6.5,1);\draw [line width=1.2pt,color=qqqqff] (-6.5,1)-- (-6.5,0.5);\draw [line width=1.2pt,color=qqqqff] (-6.5,0.5)-- (-6.5,0);\draw [line width=1.2pt,color=qqqqff] (-7,0.5)-- (-6.5,0.5);\draw [line width=1.2pt,color=qqqqff] (-7,1)-- (-6.5,1);\draw [line width=1.2pt,color=qqqqff] (-7,1.5)-- (-6.5,1.5);\draw [line width=1.2pt,color=qqqqff] (-7,2)-- (-6.5,2);\draw [line width=1.2pt,color=qqqqff] (-7,2.5)-- (-6.5,2.5);\draw [line width=1.2pt,color=qqqqff] (-7,3)-- (-6.5,3);\draw [line width=1.2pt,color=qqqqff] (-7,3.5)-- (-6.5,3.5);\draw [line width=1.2pt,color=qqqqff] (-7,4)-- (-6.5,4);\draw [line width=1.2pt,color=qqqqff] (-6,0)-- (-6,0.5);\draw [line width=1.2pt,color=qqqqff] (-6,0.5)-- (-6,1);\draw [line width=1.2pt,color=qqqqff] (-6,1)-- (-5.5,1);\draw [line width=1.2pt,color=qqqqff] (-5.5,1)-- (-5.5,0.5);\draw [line width=1.2pt,color=qqqqff] (-5.5,0.5)-- (-5.5,0);\draw [line width=1.2pt,color=qqqqff] (-6,0.5)-- (-5.5,0.5);\draw [line width=1.2pt,color=qqqqff] (-5.5,0.5)-- (-5,0.5);\draw [line width=1.2pt,color=qqqqff] (-5,0.5)-- (-5,0);\draw [line width=1.2pt,color=qqqqff] (-4.5,0)-- (-4.5,0.5);\draw [line width=1.2pt,color=qqqqff] (-4.5,0.5)-- (-4.5,1);\draw [line width=1.2pt,color=qqqqff] (-4.5,1)-- (-4.5,1.5);\draw [line width=1.2pt,color=qqqqff] (-4.5,1.5)-- (-4.5,2);\draw [line width=1.2pt,color=qqqqff] (-4,0)-- (-4,0.5);\draw [line width=1.2pt,color=qqqqff] (-4,0.5)-- (-4,1);\draw [line width=1.2pt,color=qqqqff] (-4,1)-- (-4,1.5);\draw [line width=1.2pt,color=qqqqff] (-4,1.5)-- (-4,2);
\draw [line width=1.2pt,color=qqqqff] (-4,2)-- (-4.5,2);
\draw [line width=1.2pt,color=qqqqff] (-4.5,1.5)-- (-4,1.5);\draw [line width=1.2pt,color=qqqqff] (-4.5,1)-- (-4,1);\draw [line width=1.2pt,color=qqqqff] (-4.5,0.5)-- (-4,0.5);\draw [line width=1.2pt,color=qqqqff] (-4,1.5)-- (-3.5,1.5);\draw [line width=1.2pt,color=qqqqff] (-3.5,1.5)-- (-3.5,1);\draw [line width=1.2pt,color=qqqqff] (-3.5,1)-- (-3.5,0.5);\draw [line width=1.2pt,color=qqqqff] (-3.5,0.5)-- (-3.5,0);\draw [line width=1.2pt,color=qqqqff] (-4,0.5)-- (-3.5,0.5);\draw [line width=1.2pt,color=qqqqff] (-4,1)-- (-3.5,1);\draw [line width=1.2pt,color=qqqqff] (-3.5,0.5)-- (-3,0.5);\draw [line width=1.2pt,color=qqqqff] (-3,0.5)-- (-3,0);\draw [line width=1.pt,dotted] (-6.5,4.5)-- (-2,0);\draw [line width=1.pt,dotted] (-3,0.5)-- (-2.5,0);\draw [line width=1.pt,dotted] (-5.5,1)-- (-4.5,0);\draw [line width=1.pt,dotted] (2.5,2)-- (0.5,0);
\draw [line width=1.2pt,color=ffqqqq] (4.5,0.5)-- (4.5,1);
\draw [line width=1.2pt,color=ffqqqq] (4.5,1)-- (5,1);
\draw [line width=1.2pt,color=ffqqqq] (5,1)-- (5,0.5);
\draw [line width=1.pt,dotted] (4.5,1)-- (3.5,0);
\end{tikzpicture}}}
\caption{\footnotesize Representation of the Tamari diagram $9021043100$ (left) and of the dual Tamari diagram $0010040002$ (right), both of size $10$.}
\label{dtdtd}
\end{figure}
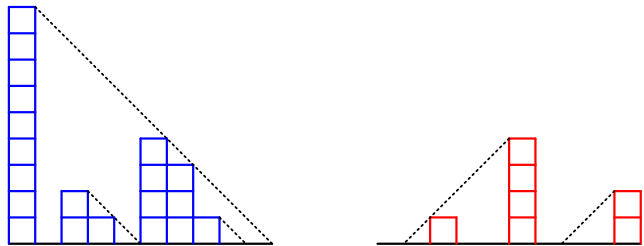

\begin{definition}\label{compatibleDT}
Let $u$ be a Tamari diagram of size $n$ and $v$ be a dual Tamari diagram  of size $n$. The diagrams $u$ and $v$ are \Def{compatible} if for all $1\leq i < j\leq n$ such that $j-i\leq u_i$, we have $v_j < j-i$. 

If $u$ and $v$ are compatible, then the pair $(u,v)$ is a \Def{Tamari interval diagram}. The set of Tamari interval diagrams of size $n$ is denoted by $\dit$.
\end{definition}

For example, the two diagrams of Figure~\ref{dtdtd} are compatible.
Note that Definition~\ref{compatibleDT} implies in particular that either $u_i = 0$ or $v_{i+1} = 0$.

As previously, we draw Tamari interval diagrams through columns, as shown in Figure~\ref{dit10} which displays the Tamari interval diagram $(9021043100, 0010040002)$.
The Tamari diagram $u$ is drawn in blue (under) and its dual $v$ is drawn in 
red (over). 

\definecolor{rvwvcq}{rgb}{0.08235294117647059,0.396078431372549,0.7529411764705882}\definecolor{ffqqqq}{rgb}{1,0,0}\definecolor{qqqqff}{rgb}{0,0,1}

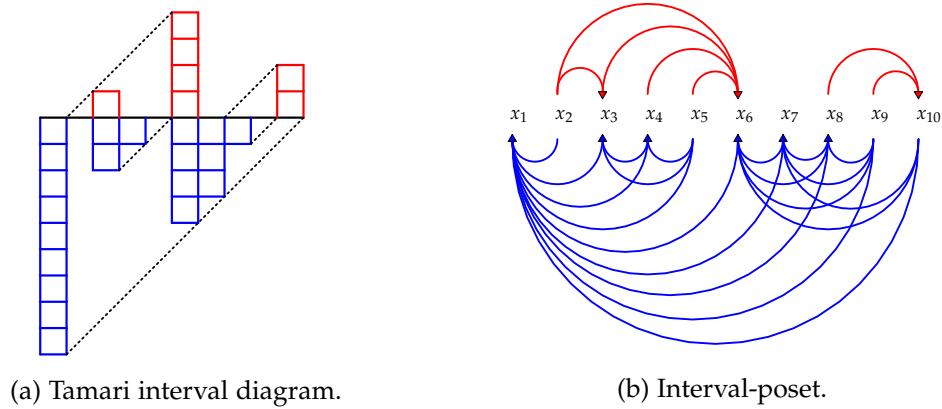
\begin{figure}[h!]
\subfloat[][Tamari interval diagram.]{
\begin{minipage}[c]{.6\textwidth}
\scalebox{0.7}
{\begin{tikzpicture}[line cap=round,line join=round,>=triangle 45,x=1cm,y=1cm]\clip(-4.5,-4.7) rectangle (13.18,2.3);\draw [line width=1.2pt,color=qqqqff] (0,0)-- (0,-0.5);\draw [line width=1.2pt,color=qqqqff] (0,-0.5)-- (0,-1);\draw [line width=1.2pt,color=qqqqff] (0,-1)-- (0,-1.5);\draw [line width=1.2pt,color=qqqqff] (0,-1.5)-- (0,-2);\draw [line width=1.2pt,color=qqqqff] (0,-2)-- (0,-2.5);\draw [line width=1.2pt,color=qqqqff] (0,-2.5)-- (0,-3);\draw [line width=1.2pt,color=qqqqff] (0,-3)-- (0,-3.5);\draw [line width=1.2pt,color=qqqqff] (0,-3.5)-- (0,-4);\draw [line width=1.2pt,color=qqqqff] (0,-4)-- (0,-4.5);\draw [line width=1.2pt,color=qqqqff] (0,-4.5)-- (0.5,-4.5);\draw [line width=1.2pt,color=qqqqff] (0.5,-4.5)-- (0.5,-4);\draw [line width=1.2pt,color=qqqqff] (0.5,-4)-- (0.5,-3.5);\draw [line width=1.2pt,color=qqqqff] (0.5,-3.5)-- (0.5,-3);\draw [line width=1.2pt,color=qqqqff] (0.5,-3)-- (0.5,-2.5);\draw [line width=1.2pt,color=qqqqff] (0.5,-2.5)-- (0.5,-2);\draw [line width=1.2pt,color=qqqqff] (0.5,-2)-- (0.5,-1.5);\draw [line width=1.2pt,color=qqqqff] (0.5,-1.5)-- (0.5,-1);\draw [line width=1.2pt,color=qqqqff] (0.5,-1)-- (0.5,-0.5);\draw [line width=1.2pt,color=qqqqff] (0.5,-0.5)-- (0.5,0);\draw [line width=1.2pt] (0,0)-- (5,0);\draw [line width=1.2pt,color=qqqqff] (0,-0.5)-- (0.5,-0.5);\draw [line width=1.2pt,color=qqqqff] (0,-1)-- (0.5,-1);\draw [line width=1.2pt,color=qqqqff] (0,-1.5)-- (0.5,-1.5);\draw [line width=1.2pt,color=qqqqff] (0,-2)-- (0.5,-2);\draw [line width=1.2pt,color=qqqqff] (0,-2.5)-- (0.5,-2.5);\draw [line width=1.2pt,color=qqqqff] (0,-3)-- (0.5,-3);\draw [line width=1.2pt,color=qqqqff] (0,-3.5)-- (0.5,-3.5);\draw [line width=1.2pt,color=qqqqff] (0,-4)-- (0.5,-4);\draw [line width=1.2pt,color=qqqqff] (1,0)-- (1,-0.5);\draw [line width=1.2pt,color=qqqqff] (1,-0.5)-- (1,-1);\draw [line width=1.2pt,color=qqqqff] (1,-1)-- (1.5,-1);\draw [line width=1.2pt,color=qqqqff] (1.5,-1)-- (1.5,-0.5);\draw [line width=1.2pt,color=qqqqff] (1.5,-0.5)-- (1,-0.5);\draw [line width=1.2pt,color=qqqqff] (1.5,0)-- (1.5,-0.5);\draw [line width=1.2pt,color=qqqqff] (1.5,-0.5)-- (2,-0.5);\draw [line width=1.2pt,color=qqqqff] (2,-0.5)-- (2,0);\draw [line width=1.2pt,color=qqqqff] (2.5,0)-- (2.5,-0.5);\draw [line width=1.2pt,color=qqqqff] (2.5,-0.5)-- (2.5,-1);\draw [line width=1.2pt,color=qqqqff] (2.5,-1)-- (2.5,-1.5);\draw [line width=1.2pt,color=qqqqff] (2.5,-1.5)-- (2.5,-2);\draw [line width=1.2pt,color=qqqqff] (2.5,-2)-- (3,-2);\draw [line width=1.2pt,color=qqqqff] (3,-2)-- (3,-1.5);\draw [line width=1.2pt,color=qqqqff] (3,-1.5)-- (2.5,-1.5);\draw [line width=1.2pt,color=qqqqff] (3,-1.5)-- (3,-1);\draw [line width=1.2pt,color=qqqqff] (3,-1)-- (2.5,-1);\draw [line width=1.2pt,color=qqqqff] (2.5,-0.5)-- (3,-0.5);\draw [line width=1.2pt,color=qqqqff] (3,-0.5)-- (3,-1);\draw [line width=1.2pt,color=qqqqff] (3,-0.5)-- (3,0);\draw [line width=1.2pt,color=qqqqff] (3,-1.5)-- (3.5,-1.5);\draw [line width=1.2pt,color=qqqqff] (3.5,-1.5)-- (3.5,-1);\draw [line width=1.2pt,color=qqqqff] (3.5,-1)-- (3,-1);\draw [line width=1.2pt,color=qqqqff] (3.5,-1)-- (3.5,-0.5);\draw [line width=1.2pt,color=qqqqff] (3.5,-0.5)-- (3.5,0);\draw [line width=1.2pt,color=qqqqff] (3.5,-0.5)-- (4,-0.5);\draw [line width=1.2pt,color=qqqqff] (4,-0.5)-- (4,0);\draw [line width=1.2pt,color=qqqqff] (3,-0.5)-- (3.5,-0.5);\draw [line width=1.2pt,color=ffqqqq] (4.5,0)-- (4.5,0.5);\draw [line width=1.2pt,color=ffqqqq] (4.5,0.5)-- (5,0.5);\draw [line width=1.2pt,color=ffqqqq] (5,0.5)-- (5,0);
\draw [line width=1.2pt,color=ffqqqq] (4.5,0.5)-- (4.5,1);
\draw [line width=1.2pt,color=ffqqqq] (4.5,1)-- (5,1);
\draw [line width=1.2pt,color=ffqqqq] (5,1)-- (5,0.5);
\draw [line width=1.pt,dotted] (4.5,1)-- (3.5,0);
\draw [line width=1.2pt,color=ffqqqq] (3,0)-- (3,0.5);\draw [line width=1.2pt,color=ffqqqq] (3,0.5)-- (3,1);\draw [line width=1.2pt,color=ffqqqq] (3,1)-- (3,1.5);\draw [line width=1.2pt,color=ffqqqq] (3,1.5)-- (3,2);\draw [line width=1.2pt,color=ffqqqq] (3,2)-- (2.5,2);\draw [line width=1.2pt,color=ffqqqq] (2.5,2)-- (2.5,1.5);\draw [line width=1.2pt,color=ffqqqq] (2.5,1.5)-- (2.5,1);\draw [line width=1.2pt,color=ffqqqq] (2.5,1)-- (2.5,0.5);\draw [line width=1.2pt,color=ffqqqq] (2.5,0.5)-- (2.5,0);\draw [line width=1.2pt,color=ffqqqq] (2.5,0.5)-- (3,0.5);\draw [line width=1.2pt,color=ffqqqq] (2.5,1)-- (3,1);\draw [line width=1.2pt,color=ffqqqq] (2.5,1.5)-- (3,1.5);\draw [line width=1.2pt,color=ffqqqq] (1,0)-- (1,0.5);\draw [line width=1.2pt,color=ffqqqq] (1,0.5)-- (1.5,0.5);\draw [line width=1.2pt,color=ffqqqq] (1.5,0.5)-- (1.5,0);
\draw [line width=1.pt,dotted] (2.5,2)-- (0.5,0);
\draw [line width=1.pt,dotted] (4,-0.5)-- (4.5,0);
\draw [line width=1.pt,dotted] (0.5,-4.5)-- (5,0);
\draw [line width=1.pt,dotted] (1.5,-1)-- (2.5,0);
\end{tikzpicture}}
\end{minipage}
\label{dit10}}
\subfloat[][Interval-poset.]{
\begin{minipage}[c]{.25\textwidth}
\hspace*{-1cm}
\scalebox{.6}
{\begin{tikzpicture}[line cap=round,line join=round,>=triangle 45,x=1cm,y=1cm]
\clip(-5.3,-4.97) rectangle (4.8,3.13);
\draw (-5.2,0.8) node[anchor=north west] {$x_1$};
\draw (-4.2,0.8) node[anchor=north west] {$x_2$};
\draw (-3.2,0.8) node[anchor=north west] {$x_3$};
\draw (-2.2,0.8) node[anchor=north west] {$x_4$};
\draw (-1.2,0.8) node[anchor=north west] {$x_5$};
\draw (-0.2,0.8) node[anchor=north west] {$x_6$};
\draw (0.8,0.8) node[anchor=north west] {$x_7$};
\draw (1.8,0.8) node[anchor=north west] {$x_8$};
\draw (2.8,0.8) node[anchor=north west] {$x_9$};
\draw (3.8,0.8) node[anchor=north west] {$x_{10}$};
\draw [shift={(-0.5,-0.03983480176211452)},line width=1.2pt,color=qqqqff]  plot[domain=-3.150444600548006:0.008851946958212795,variable=\t]({1*4.500176308927399*cos(\t r)+0*4.500176308927399*sin(\t r)},{0*4.500176308927399*cos(\t r)+1*4.500176308927399*sin(\t r)});\draw [shift={(-2,0)},line width=1.2pt,color=qqqqff]  plot[domain=-3.141592653589793:0,variable=\t]({1*1*cos(\t r)+0*1*sin(\t r)},{0*1*cos(\t r)+1*1*sin(\t r)});\draw [shift={(-1.5,-0.0009)},line width=1.2pt,color=qqqqff]  plot[domain=-3.143392651645797:0.0017999980560038029,variable=\t]({1*0.5000008099993439*cos(\t r)+0*0.5000008099993439*sin(\t r)},{0*0.5000008099993439*cos(\t r)+1*0.5000008099993439*sin(\t r)});\draw [shift={(-2.5,-0.0001)},line width=1.2pt,color=qqqqff]  plot[domain=-3.141792653587127:0.00019999999733375545,variable=\t]({1*0.5000000099999999*cos(\t r)+0*0.5000000099999999*sin(\t r)},{0*0.5000000099999999*cos(\t r)+1*0.5000000099999999*sin(\t r)});\draw [shift={(2,0)},line width=1.2pt,color=qqqqff]  plot[domain=-3.141592653589793:0,variable=\t]({1*2*cos(\t r)+0*2*sin(\t r)},{0*2*cos(\t r)+1*2*sin(\t r)});\draw [shift={(2.5,-0.04211038961038971)},line width=1.2pt,color=qqqqff]  plot[domain=-3.1696588749668884:0.028066221377095246,variable=\t]({1*1.5005909785524962*cos(\t r)+0*1.5005909785524962*sin(\t r)},{0*1.5005909785524962*cos(\t r)+1*1.5005909785524962*sin(\t r)});\draw [shift={(2.5,-0.020480769230769108)},line width=1.2pt,color=qqqqff]  plot[domain=-3.182531306013036:0.04093865242324278,variable=\t]({1*0.5004192861074446*cos(\t r)+0*0.5004192861074446*sin(\t r)},{0*0.5004192861074446*cos(\t r)+1*0.5004192861074446*sin(\t r)});\draw [shift={(-4.5,-0.0001)},line width=1.2pt,color=qqqqff]  plot[domain=-3.141792653587126:0.00019999999733286727,variable=\t]({1*0.5000000099999999*cos(\t r)+0*0.5000000099999999*sin(\t r)},{0*0.5000000099999999*cos(\t r)+1*0.5000000099999999*sin(\t r)});\draw [shift={(-4,0)},line width=1.2pt,color=qqqqff]  plot[domain=-3.141592653589793:0,variable=\t]({1*1*cos(\t r)+0*1*sin(\t r)},{0*1*cos(\t r)+1*1*sin(\t r)});\draw [shift={(-3.5,-0.000033333333333255645)},line width=1.2pt,color=qqqqff]  plot[domain=-3.1416148758120115:0.00002222222221851245,variable=\t]({1*1.5000000003703704*cos(\t r)+0*1.5000000003703704*sin(\t r)},{0*1.5000000003703704*cos(\t r)+1*1.5000000003703704*sin(\t r)});\draw [shift={(-3,0)},line width=1.2pt,color=qqqqff]  plot[domain=-3.141592653589793:0,variable=\t]({1*2*cos(\t r)+0*2*sin(\t r)},{0*2*cos(\t r)+1*2*sin(\t r)});\draw [shift={(-2.5,-0.0005)},line width=1.2pt,color=qqqqff]  plot[domain=-3.1417926535871263:0.00019999999733337885,variable=\t]({1*2.5000000499999997*cos(\t r)+0*2.5000000499999997*sin(\t r)},{0*2.5000000499999997*cos(\t r)+1*2.5000000499999997*sin(\t r)});\draw [shift={(-2,0)},line width=1.2pt,color=qqqqff]  plot[domain=-3.141592653589793:0,variable=\t]({1*3*cos(\t r)+0*3*sin(\t r)},{0*3*cos(\t r)+1*3*sin(\t r)});\draw [shift={(-1.5,0.04021676300578028)},line width=1.2pt,color=qqqqff]  plot[domain=3.1530826516423014:6.271695309127078,variable=\t]({1*3.500231047806225*cos(\t r)+0*3.500231047806225*sin(\t r)},{0*3.500231047806225*cos(\t r)+1*3.500231047806225*sin(\t r)});\draw [shift={(-1,0)},line width=1.2pt,color=qqqqff]  plot[domain=-3.141592653589793:0,variable=\t]({1*4*cos(\t r)+0*4*sin(\t r)},{0*4*cos(\t r)+1*4*sin(\t r)});\draw [shift={(0.5,-0.01971153846153845)},line width=1.2pt,color=qqqqff]  plot[domain=-3.1809953260308115:0.039402672441018395,variable=\t]({1*0.5003883938987002*cos(\t r)+0*0.5003883938987002*sin(\t r)},{0*0.5003883938987002*cos(\t r)+1*0.5003883938987002*sin(\t r)});\draw [shift={(1,0)},line width=1.2pt,color=qqqqff]  plot[domain=-3.141592653589793:0,variable=\t]({1*1*cos(\t r)+0*1*sin(\t r)},{0*1*cos(\t r)+1*1*sin(\t r)});\draw [shift={(1.5,-0.03951298701298682)},line width=1.2pt,color=qqqqff]  plot[domain=-3.1679285545601936:0.026335900970400596,variable=\t]({1*1.5005203351313465*cos(\t r)+0*1.5005203351313465*sin(\t r)},{0*1.5005203351313465*cos(\t r)+1*1.5005203351313465*sin(\t r)});\draw [shift={(2,0)},line width=1.2pt,color=qqqqff]  plot[domain=-3.141592653589793:0,variable=\t]({1*1*cos(\t r)+0*1*sin(\t r)},{0*1*cos(\t r)+1*1*sin(\t r)});\draw [shift={(1.5,0.04163043478260845)},line width=1.2pt,color=qqqqff]  plot[domain=3.224661921028796:6.200116039740583,variable=\t]({1*0.5017300998546821*cos(\t r)+0*0.5017300998546821*sin(\t r)},{0*0.5017300998546821*cos(\t r)+1*0.5017300998546821*sin(\t r)});\draw [shift={(3,1)},line width=1.2pt,color=ffqqqq]  plot[domain=0:3.141592653589793,variable=\t]({1*1*cos(\t r)+0*1*sin(\t r)},{0*1*cos(\t r)+1*1*sin(\t r)});\draw [shift={(3.5,1.0049)},line width=1.2pt,color=ffqqqq]  plot[domain=-0.009799686287411014:3.1513923398772037,variable=\t]({1*0.5000240094235472*cos(\t r)+0*0.5000240094235472*sin(\t r)},{0*0.5000240094235472*cos(\t r)+1*0.5000240094235472*sin(\t r)});\draw [shift={(-3.5,1.0745689655172412)},line width=1.2pt,color=ffqqqq]  plot[domain=-0.14804674194062262:3.2896393955304157,variable=\t]({1*0.5055299502683412*cos(\t r)+0*0.5055299502683412*sin(\t r)},{0*0.5055299502683412*cos(\t r)+1*0.5055299502683412*sin(\t r)});\draw [shift={(-2,1)},line width=1.2pt,color=ffqqqq]  plot[domain=0:3.141592653589793,variable=\t]({1*2*cos(\t r)+0*2*sin(\t r)},{0*2*cos(\t r)+1*2*sin(\t r)});\draw [shift={(-1.5,1.0003)},line width=1.2pt,color=ffqqqq]  plot[domain=-0.00019999999733322227:3.1417926535871263,variable=\t]({1*1.5000000299999998*cos(\t r)+0*1.5000000299999998*sin(\t r)},{0*1.5000000299999998*cos(\t r)+1*1.5000000299999998*sin(\t r)});\draw [shift={(-1,1)},line width=1.2pt,color=ffqqqq]  plot[domain=0:3.141592653589793,variable=\t]({1*1*cos(\t r)+0*1*sin(\t r)},{0*1*cos(\t r)+1*1*sin(\t r)});\draw [shift={(-0.5,1.0009)},line width=1.2pt,color=ffqqqq]  plot[domain=-0.0017999980560032824:3.1433926516457964,variable=\t]({1*0.5000008099993439*cos(\t r)+0*0.5000008099993439*sin(\t r)},{0*0.5000008099993439*cos(\t r)+1*0.5000008099993439*sin(\t r)});\begin{scriptsize}\draw [fill=qqqqff,shift={(-5,0)}] (0,0) ++(0 pt,3pt) -- ++(2.598076211353316pt,-4.5pt)--++(-5.196152422706632pt,0 pt) -- ++(2.598076211353316pt,4.5pt);\draw [fill=qqqqff,shift={(-3,0)}] (0,0) ++(0 pt,3pt) -- ++(2.598076211353316pt,-4.5pt)--++(-5.196152422706632pt,0 pt) -- ++(2.598076211353316pt,4.5pt);\draw [fill=qqqqff,shift={(-2,0)}] (0,0) ++(0 pt,3pt) -- ++(2.598076211353316pt,-4.5pt)--++(-5.196152422706632pt,0 pt) -- ++(2.598076211353316pt,4.5pt);\draw [fill=qqqqff,shift={(0,0)}] (0,0) ++(0 pt,3pt) -- ++(2.598076211353316pt,-4.5pt)--++(-5.196152422706632pt,0 pt) -- ++(2.598076211353316pt,4.5pt);\draw [fill=qqqqff,shift={(1,0)}] (0,0) ++(0 pt,3pt) -- ++(2.598076211353316pt,-4.5pt)--++(-5.196152422706632pt,0 pt) -- ++(2.598076211353316pt,4.5pt);\draw [fill=qqqqff,shift={(2,0)}] (0,0) ++(0 pt,3pt) -- ++(2.598076211353316pt,-4.5pt)--++(-5.196152422706632pt,0 pt) -- ++(2.598076211353316pt,4.5pt);\draw [fill=ffqqqq,shift={(4,1)},rotate=180] (0,0) ++(0 pt,3pt) -- ++(2.598076211353316pt,-4.5pt)--++(-5.196152422706632pt,0 pt) -- ++(2.598076211353316pt,4.5pt);\draw [fill=ffqqqq,shift={(-3,1)},rotate=180] (0,0) ++(0 pt,3pt) -- ++(2.598076211353316pt,-4.5pt)--++(-5.196152422706632pt,0 pt) -- ++(2.598076211353316pt,4.5pt);\draw [fill=ffqqqq,shift={(0,1)},rotate=180] (0,0) ++(0 pt,3pt) -- ++(2.598076211353316pt,-4.5pt)--++(-5.196152422706632pt,0 pt) -- ++(2.598076211353316pt,4.5pt);\end{scriptsize}\end{tikzpicture}}
\end{minipage}
\label{ip10}}
\caption{\footnotesize Representation of the Tamari interval diagram $(9021043100, 0010040002)$ of size $10$ and its corresponding interval-poset.}
\label{dit}
\end{figure}

Let $\chi$ be the map sending a Tamari interval diagram $(u, v)$ of size $n$ to the binary relation 
\begin{equation}
    \left(\{x_1, \dots, x_n\}, \lhd\right),
\end{equation}
where for all $i \in [n]$ 
and $0\leq l\leq u_{i}$,
$x_{i+l}\lhd x_{i}$,
and for all $i\in [n]$ and $0\leq k\leq v_{i}$,
$x_{i-k}\lhd x_{i}$.

We recall that an \Def{interval-poset} $P$ of size $n$ is a partial order $\lhd$ on the set $\{x_1, \dots,x_n\}$ such that, for $i < k$, if $x_k\lhd x_i$ then for all $x_j$ such that $i<j<k$, one has $x_j\lhd x_i$, and if $x_i\lhd x_k$ then for all $x_j$ such that $i<j<k$, one has $x_j\lhd x_k$~\cite{CP15} (see Figure~\ref{ip10} for instance).
We denote $\ip$ the set of interval-posets of size $n$.

\begin{Theorem}
The application $\chi$ is a bijection from $\dit$ to $\ip$.
\end{Theorem}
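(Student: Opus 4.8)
The plan is to prove that $\chi$ is well defined --- that is, that $\chi(u,v)$ always belongs to $\ip$ --- and then to exhibit its inverse explicitly. The guiding observation is that the relation $\lhd$ produced by $\chi$ records, for each $i$, exactly two contiguous blocks below $x_i$: the block $x_i,x_{i+1},\dots,x_{i+u_i}$ on the right of $i$ and the block $x_{i-v_i},\dots,x_{i-1},x_i$ on the left. Conversely, the interval-poset axioms force the elements below a fixed $x_i$ on each side of $i$ to form such a block, so that $u_i$ (resp.\ $v_i$) can be read off an interval-poset $P$ as the largest $l$ (resp.\ $k$) with $x_{i+l}\lhd x_i$ (resp.\ $x_{i-k}\lhd x_i$). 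I will therefore take $\chi(u,v)$ to be the binary relation consisting precisely of the listed pairs, and show it is a partial order of the required kind.

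First I would check that, for $(u,v)\in\dit$, the relation $\lhd:=\chi(u,v)$ is a partial order satisfying the interval-poset conditions. Reflexivity is immediate (take $l=0$ or $k=0$). The interval-poset axioms are built in: if $x_k\lhd x_i$ with $i<k$ then $k-i\le u_i$, hence $j-i<u_i$ and $x_j\lhd x_i$ for $i<j<k$, and the increasing case is symmetric through $v$. Antisymmetry is where compatibility enters: if $x_a\lhd x_b$ and $x_b\lhd x_a$ with $a<b$, then $b-a\le u_a$ (from $x_b\lhd x_a$) and $b-a\le v_b$ (from $x_a\lhd x_b$), contradicting Definition~\ref{compatibleDT} applied with $i=a$, $j=b$. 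The main obstacle is transitivity: given $x_a\lhd x_b\lhd x_c$, I would argue by cases on the position of $b$ relative to $a$ and $c$. When $a\ge b\ge c$ (resp.\ $a\le b\le c$), the relation $x_a\lhd x_c$ follows by applying condition~\ref{condDT2} of Definition~\ref{diagTam} at index $c$ with $j=b-c$ (resp.\ condition~\ref{condDTD2} of Definition~\ref{diagTamDual} at index $c$ with $j=c-b$) to bound $u_b$ (resp.\ $v_b$) in terms of $u_c$ (resp.\ $v_c$). When $b$ is the largest or the smallest of the three indices, the conclusion is immediate when $a$ and $c$ lie on the expected side, and the complementary sub-case is shown to be vacuous by invoking compatibility exactly as in the antisymmetry argument. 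This gives $\chi(\dit)\subseteq\ip$.

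Next I would construct the inverse. Given $P=(\{x_1,\dots,x_n\},\lhd)\in\ip$, set $u_i=\max\{l\ge 0 : x_{i+l}\lhd x_i\}$ and $v_i=\max\{k\ge 0 : x_{i-k}\lhd x_i\}$. Condition~(i) of Definitions~\ref{diagTam} and~\ref{diagTamDual} holds because $x_{i+u_i}$ and $x_{i-v_i}$ must exist. Condition~\ref{condDT2} of Definition~\ref{diagTam} follows from transitivity of $\lhd$: for $j\le u_i$ one has $x_{i+j}\lhd x_i$, and if $x_{i+j+l}\lhd x_{i+j}$ then $x_{i+j+l}\lhd x_i$, so $j+l\le u_i$; condition~\ref{condDTD2} of Definition~\ref{diagTamDual} is symmetric. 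Compatibility holds because, if $j-i\le u_i$ and $v_j\ge j-i$, then $x_j\lhd x_i$ and, using the interval-poset axiom to fill in the left block below $x_j$, also $x_i\lhd x_j$, forcing $i=j$. Finally $\chi(u,v)=P$: any pair $x_a\lhd x_i$ of $P$ with $a>i$ satisfies $a-i\le u_i$, hence is one of the generating pairs of $\chi(u,v)$, and symmetrically for $a<i$, so $P\subseteq\chi(u,v)$; the reverse inclusion is precisely the definition of $u_i,v_i$ together with the interval-poset axiom. Thus $\chi$ is surjective, and since the $u_i,v_i$ just defined are the only possible preimage data, $\chi$ is also injective; hence $\chi$ is a bijection from $\dit$ to $\ip$.
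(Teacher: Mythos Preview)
Your proof is correct and follows essentially the same strategy as the paper's: first verify that $\chi(u,v)$ is an interval-poset, then exhibit an explicit inverse and deduce bijectivity. The only cosmetic difference is that the paper defines the inverse by $u_i=\#\{x_j:x_j\lhd x_i,\ i<j\}$ and $v_j=\#\{x_i:x_i\lhd x_j,\ i<j\}$, whereas you use $u_i=\max\{l:x_{i+l}\lhd x_i\}$ and $v_i=\max\{k:x_{i-k}\lhd x_i\}$; these coincide precisely because the interval-poset axioms force the down-sets on each side of $i$ to be contiguous blocks, a fact you already use. Your write-up is in fact more explicit than the paper's sketch, particularly in the transitivity case analysis and the vacuity arguments via compatibility.
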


\begin{proof}
We set $P = \chi(u,v)$. First, we check that $P$ satisfies all axioms of the definition of interval-posets.
Then we show that $\chi$ is surjective: for every interval-poset $P$, the pair of words $(u_1 u_2 \dots u_n , v_1 v_2 \dots v_n) \in \mathbb{N}^n\times\mathbb{N}^n$, where for all $i, j \in[n]$,
\begin{align}\label{réciproque}
    u_i = \# \{x_j\in P : x_j\lhd x_i \mbox{ and } i<j\}, \\
    v_j = \# \{x_i\in P : x_i\lhd x_j \mbox{ and } i<j\},\label{réciproque2}
\end{align}
has image $P$ and is a Tamari interval diagram. 
The proof that $\chi$ is injective is direct.
\end{proof}

\section{Cubic coordinates}\label{Part3}

We now build the set of cubic coordinates and  provide a bijection between this set and the set of Tamari interval diagrams. We conclude this section by reviewing some properties of cubic coordinates.
\smallbreak

Let $(u,v)\in\dit$. We build a $(n-1)$-tuple $(u_1-v_2, u_2-v_3,\dots, u_{n-1}-v_n)$ from letters of $(u,v)$.
This $(n-1)$-tuple can be defined by using the definition of Tamari interval diagrams.

\begin{definition}\label{coordCubic}
Let $c$ be a $(n-1)$-tuple with entries in $\mathbb{Z}$. We say that $c$ is a \Def{cubic coordinate} if the pair $(u, v)$, where
$u$ is the word defined by $u_n = 0$ and for all $i\in[n-1]$ by
\begin{equation*}
    u_i = \max (c_i,~0),
\end{equation*}
and $v$ is the word defined by $v_1 = 0$ and
for all $2\leq i\leq n$ by
\begin{equation*}
    v_i = |\min (c_{i-1},~0)|,
\end{equation*}
is a Tamari interval diagram. The size of a cubic coordinate is its number of entries plus one.
The set of cubic coordinates of size $n$ is denoted by $\cc$.
\end{definition}

For example, in Figure~\ref{dit}, the Tamari interval diagram has for cubic coordinate $(9,-1,2,1,-4,4,3,1,-2)$.

Let $\phi$ be the application which maps a cubic coordinate $c$ to a Tamari interval diagram as stated in Definition~\ref{coordCubic}.

\begin{Theorem}\label{bijDIT-CC}
The application $\phi : \cc \rightarrow \dit$ is bijective.
\end{Theorem}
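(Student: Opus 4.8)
The plan is to produce an explicit two-sided inverse of $\phi$. Define $\psi\colon\dit\to\mathbb{Z}^{n-1}$ by $\psi(u,v)=(u_1-v_2,\,u_2-v_3,\,\dots,\,u_{n-1}-v_n)$; this is exactly the tuple construction recalled just before Definition~\ref{coordCubic}. I would then verify the three facts that together give the statement: that $\psi$ actually lands in $\cc$, that $\psi\circ\phi=\mathrm{id}_{\cc}$, and that $\phi\circ\psi=\mathrm{id}_{\dit}$.

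The identity $\psi\circ\phi=\mathrm{id}_{\cc}$ is immediate and uses nothing about Tamari diagrams. For $c\in\cc$, writing $(u,v)=\phi(c)$, one has for each $i\in[n-1]$
\begin{equation*}
\psi(u,v)_i=u_i-v_{i+1}=\max(c_i,0)-\lvert\min(c_i,0)\rvert=\max(c_i,0)+\min(c_i,0)=c_i,
\end{equation*}
by the elementary identity $\max(x,0)+\min(x,0)=x$. In particular $\phi$ is injective.

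For $\phi\circ\psi=\mathrm{id}_{\dit}$, take $(u,v)\in\dit$, set $c=\psi(u,v)$ and $(u',v')=\phi(c)$; the goal is $(u',v')=(u,v)$. The boundary entries agree automatically: $u'_n=0=u_n$ because $0\leq u_n\leq n-n$ by the first condition in Definition~\ref{diagTam}, and $v'_1=0=v_1$ because $0\leq v_1\leq 1-1$ by the first condition in Definition~\ref{diagTamDual}. For the remaining indices $i\in[n-1]$ one needs $\max(u_i-v_{i+1},0)=u_i$ and $\lvert\min(u_i-v_{i+1},0)\rvert=v_{i+1}$, and both equalities hold as soon as at least one of the nonnegative integers $u_i$ and $v_{i+1}$ vanishes. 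That dichotomy is precisely the remark following Definition~\ref{compatibleDT}: taking $j=i+1$ in the compatibility condition, if $u_i\geq 1$ then $v_{i+1}<1$, hence $v_{i+1}=0$. This proves $\phi(\psi(u,v))=(u,v)$, which moreover forces $\psi(u,v)\in\cc$ by the very definition of cubic coordinates, so $\psi$ is a well-defined map $\dit\to\cc$. Combining the two composition identities, $\phi$ is a bijection with inverse $\psi$.

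The argument is essentially a bookkeeping check; the one place where a property of Tamari interval diagrams must genuinely be invoked is the "$u_i=0$ or $v_{i+1}=0$" alternative, which is exactly what compatibility provides, so I do not anticipate any real obstacle.
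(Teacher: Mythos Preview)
Your proof is correct and follows essentially the same route as the paper: both construct the explicit inverse $(u,v)\mapsto(u_1-v_2,\dots,u_{n-1}-v_n)$ and rely on the compatibility consequence ``$u_i=0$ or $v_{i+1}=0$'' to check that it really is an inverse; the paper merely organizes this as a direct injectivity check plus surjectivity via the same map, while you verify both composition identities. One cosmetic remark: the symbol $\psi$ is already reserved in the paper for the composite $\phi^{-1}\circ\chi^{-1}\circ\rho^{-1}$, so you should pick a different name for your inverse map.
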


\begin{proof}
Let $c, c'\in\cc$ such that $c\neq c'$. Then, there is an entry $c_i\neq c'_i$, with $i\in[n-1]$. By the definition of the application $\phi$, there is a letter $u_i\neq u'_i$, or a letter $v_{i+1}\neq v'_{i+1}$, meaning that $(u,v)\neq (u',v')$. This shows the injectivity of $\phi$.

Let $(u,v)\in\dit$ and let $c = (u_1-v_2, u_2-v_3,\dots, u_{n-1}-v_n)$ be the $(n-1)$-tuple whose entries are given by the difference $u_i-v_{i+1}$, for all $i\in[n-1]$. If $u_i\neq 0$ then by Definition~\ref{compatibleDT}, $v_{i+1} = 0$, so we have $\phi(c) = (u,v)$. As $(u,v)$ is a Tamari interval diagram by hypothesis and by Definition~\ref{coordCubic}, the application $\phi$ is surjective.
\end{proof}

\begin{Lemma}\label{lemmeutilepartout}
Let $c \in\cc$ with $c_i \ne 0$ for some $i\in[n-1]$. We denote by $c'$ the $(n-1)$-tuple such that $c'_i = 0$ and all entries having indices different from $i$ are the ones of $c$. Then $c'$ is a cubic coordinate.
\end{Lemma}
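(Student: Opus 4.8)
The plan is to use Definition~\ref{coordCubic} directly: to show $c'\in\cc$ it suffices to check that $\phi(c')$ is a Tamari interval diagram. Write $(u,v)=\phi(c)$. Since $c$ and $c'$ agree in every position except $i$, comparing the formulas of Definition~\ref{coordCubic} shows that $\phi(c')$ is obtained from $(u,v)$ by replacing a single letter by $0$: if $c_i>0$ then $u_i=c_i>0$ and $v_{i+1}=0$, so $\phi(c')=(u',v)$ where $u'$ agrees with $u$ except $u'_i=0$; if $c_i<0$ then $u_i=0$ and $v_{i+1}=|c_i|>0$, so $\phi(c')=(u,v')$ where $v'$ agrees with $v$ except $v'_{i+1}=0$. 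Thus the statement reduces to: zeroing one letter of a Tamari interval diagram, on the $u$-side or on the $v$-side, again produces a Tamari interval diagram.

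First I would verify that if $u$ is a Tamari diagram and $u'$ is obtained from it by setting $u'_m=0$ (all other letters unchanged), then $u'$ is a Tamari diagram. Condition (i) of Definition~\ref{diagTam} is immediate. For condition~\ref{condDT2}, fix $a$ and $0\leq j\leq u'_a$ and check $u'_{a+j}\leq u'_a-j$: if $a=m$ the hypothesis forces $j=0$ and the inequality is trivial; if $a\neq m$ and $a+j\neq m$ it is exactly condition~\ref{condDT2} for $u$; and if $a\neq m$ but $a+j=m$ it becomes $0\leq u_a-j$, which holds because $j\leq u_a$. The same argument, with condition~\ref{condDTD2} of Definition~\ref{diagTamDual}, shows that zeroing one letter of a dual Tamari diagram yields a dual Tamari diagram.

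Next I would check that compatibility (Definition~\ref{compatibleDT}) survives. In the case $c_i>0$, where $\phi(c')=(u',v)$ with $u'_i=0$: for $a<b$ with $b-a\leq u'_a$ we cannot have $a=i$ (as $u'_i=0$), so $u'_a=u_a$ and $v_b<b-a$ follows from compatibility of $(u,v)$. In the case $c_i<0$, where $\phi(c')=(u,v')$ with $v'_{i+1}=0$: for $a<b$ with $b-a\leq u_a$, either $b=i+1$, in which case $v'_b=0<b-a$, or $b\neq i+1$, in which case $v'_b=v_b<b-a$ by compatibility of $(u,v)$. Either way $\phi(c')$ is a Tamari interval diagram, hence $c'\in\cc$.

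I expect no real obstacle here: the only point that needs attention is the observation that whenever an inequality $u_{a+j}\leq u_a-j$ of condition~\ref{condDT2} (or the dual one) is relevant, its right-hand side $u_a-j$ is automatically nonnegative, so replacing the left-hand side by $0$ cannot violate it; the rest is routine case-checking.
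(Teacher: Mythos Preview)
Your proof is correct and follows essentially the same approach as the paper: both set $(u,v)=\phi(c')$ (the paper directly, you via $\phi(c)$ and the observation that exactly one letter is zeroed) and verify the axioms of Definitions~\ref{diagTam}, \ref{diagTamDual}, and~\ref{compatibleDT} for the pair with $(u_i,v_{i+1})=(0,0)$. Your write-up is simply a more explicit rendering of the case-check the paper only sketches.
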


\begin{proof}
We set $(u,v)=\phi(c')$ and $c'_i=0$. Then we can check all axioms of Definitions~\ref{diagTam},~\ref{diagTamDual}, and~\ref{compatibleDT} for $(u,v)$ with the pair of letters $(u_i,v_{i+1})=(0,0)$.
\end{proof}

\begin{definition}\label{ditSync}
A cubic coordinate $c$ of size $n$ is \Def{synchronized} if for all $i\in [n-1]$, $c_i\ne 0$. The set of synchronized cubic coordinates of size $n$ is denoted by $\ccs$.
\end{definition}

This definition can be translated in terms of Tamari interval diagrams: a Tamari interval diagram $(u,v)$ of size $n$ is \Def{synchronized} if for all $i\in [n-1]$, $u_i \ne 0$ or $v_{i+1} \ne 0$.

We recall that a Tamari interval $[S,T]$ is synchronized if and only if the binary trees $S$ and $T$ have the same canopy~\cite{PRV17}. Let $\itam$ be the set of Tamari intervals of size $n$ and $\rho$ be the bijection from $\ip$ to $\itam$~\cite{CP15}.

\begin{Proposition}\label{ditsyn=tisyn}
Let $(u,v)\in\dit$. The Tamari interval diagram $(u,v)$ is synchronized if and only if $\rho(\chi(u,v)) = [S,T]$ is a synchronized Tamari interval. 
\end{Proposition}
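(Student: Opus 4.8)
The plan is to translate both sides of the equivalence into conditions on the canopies of $S$ and $T$ read directly off the words $u$ and $v$, and then to observe that these conditions coincide because of the compatibility constraint built into Definition~\ref{compatibleDT}.

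First I would record the combinatorial meaning of each side. By Definition~\ref{ditSync} and the translation stated just after it, $(u,v)$ is synchronized iff for every $i\in[n-1]$ one has $u_i\neq 0$ or $v_{i+1}\neq 0$; and by Definition~\ref{compatibleDT} (and the remark following it) the conditions $u_i\neq 0$ and $v_{i+1}\neq 0$ are mutually exclusive, so $(u,v)$ fails to be synchronized exactly when $u_i=v_{i+1}=0$ for some $i$. For the other side, write $[S,T]=\rho(\chi(u,v))$. The construction of $\chi$ together with the description of the Châtel--Pons bijection $\rho$ in~\cite{CP15} identifies the part of the interval-poset $\chi(u,v)$ recording the relations $x_j\lhd x_i$ with $j>i$ (controlled by $u$) and the part recording the relations $x_j\lhd x_i$ with $j<i$ (controlled by $v$) with the two endpoints of the interval; concretely, $u$ is the Tamari diagram of one of $S,T$ and $v$ is the dual Tamari diagram of the other. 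Since ``$S$ and $T$ have the same canopy'' is symmetric in the two trees, I need not keep track of which endpoint is which.

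Next I would use the dictionary between (dual) Tamari diagrams and canopies. Labelling the nodes of a binary tree $B$ of size $n$ in infix order, a straightforward induction on the tree structure shows that the $i$-th letter of the Tamari diagram of $B$ is the number of nodes in the right subtree of node $i$ (the classical reading of~\cite{Pal86}); hence node $i+1$ lies in the right subtree of node $i$ precisely when that letter is nonzero, and in that case the leaf of $B$ separating nodes $i$ and $i+1$ is a left leaf, while otherwise it is a right leaf. Consequently the canopy of $B$, seen as a word in $\{\mathtt{L},\mathtt{R}\}^{n-1}$, carries $\mathtt{L}$ in position $i$ iff the $i$-th letter of its Tamari diagram is nonzero. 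Applying this to the endpoint with Tamari diagram $u$ yields canopy letter $\mathtt{L}$ in position $i$ iff $u_i\neq 0$; applying the left--right mirror statement to the endpoint with dual Tamari diagram $v$ (whose $i$-th letter counts the nodes of the \emph{left} subtree of node $i$) yields canopy letter $\mathtt{L}$ in position $i$ iff $v_{i+1}=0$.

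Putting the pieces together, $S$ and $T$ have equal canopy iff $u_i\neq 0 \iff v_{i+1}=0$ for every $i\in[n-1]$; since $u_i\neq 0$ already forces $v_{i+1}=0$ by compatibility, this reduces to ``$v_{i+1}=0\Rightarrow u_i\neq 0$ for all $i$'', that is, to the absence of an index $i$ with $u_i=v_{i+1}=0$, i.e.\ to $(u,v)$ being synchronized. Combined with the equivalence recalled from~\cite{PRV17} between synchrony of $[S,T]$ and equality of the canopies of $S$ and $T$, this finishes the proof. The only genuinely non-routine step is the identification of the endpoints of $\rho(\chi(u,v))$ with the trees encoded by $u$ and by $v$ — matching the two halves of the interval-poset with the (dual) Tamari-diagram encodings — and I expect that to be the main obstacle; the canopy-versus-diagram dictionary is a standard induction, and the final logical step uses nothing beyond Definition~\ref{compatibleDT}.
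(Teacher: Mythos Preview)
Your proof is correct and follows essentially the same approach as the paper's: both translate the conditions $u_i=0$, $v_{i+1}=0$ into canopy information for $S$ and $T$ and use the compatibility constraint to close the argument. The paper only sketches this (``we translate the fact that $u_i=0$ and $v_{i+1}=0$ to the trees $S$ and $T$, and deduce that their canopies are distinct''), while you spell out the precise dictionary between (dual) Tamari-diagram letters and canopy letters; your identification of the potential obstacle---matching the two endpoints of $\rho(\chi(u,v))$ with the trees encoded by $u$ and $v$---is exactly the point the paper glosses over.
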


\begin{proof}
Arguing by contradiction, we translate the fact that $u_i = 0$ and $v_{i+1} = 0$ to the trees $S$ and $T$, and deduce that their canopies are distinct. Reciprocally, if the canopies of $S$ and of $T$ are distinct, we can find an index $i\in[n]$ such that $u_i = 0$ and $v_{i+1} = 0$.
\end{proof}

\begin{definition}\label{ditnouveau}
A Tamari interval diagram $(u,v)$ of size $n$ is \Def{new} if the following conditions are satisfied:
\begin{enumerate}[label=(\roman*)]
 \item \label{ditnv1} $0\leq u_i \leq n-i-1$ for all $i\in[n-1]$;
 \item \label{ditnv2} $0\leq v_j\leq j-2$ for all $j\in \{2,\dots,n\}$;
 \item \label{ditnv3} $u_{k}<l-k-1$ or $v_{l}<l-k-1$ for all $k,l \in [n]$ such that $k+1 < l$.
 \end{enumerate}
\end{definition} 

The definition of new interval-posets is given in~\cite{Rog18}. The three conditions of this original definition imply the three conditions of Definition~\ref{ditnouveau} and reciprocally. Then, we have the following proposition.

\begin{Proposition}\label{ditnv=intnv}
Let $(u,v)\in\dit$. The Tamari interval diagram $(u,v)$ is new if and only if $\chi(u,v) = P$ is a new interval-poset.
\end{Proposition}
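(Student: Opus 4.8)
The statement to prove is Proposition~\ref{ditnv=intnv}: a Tamari interval diagram $(u,v)$ is new (in the sense of Definition~\ref{ditnouveau}) if and only if $\chi(u,v)=P$ is a new interval-poset (in the sense of~\cite{Rog18}). Since the excerpt already tells us that the three conditions of the original definition of new interval-posets are equivalent to conditions \ref{ditnv1}--\ref{ditnv3} of Definition~\ref{ditnouveau}, the whole task reduces to a faithful translation: I must check that each of the three conditions in the definition of a new interval-poset, read through the bijection $\chi$, says exactly the corresponding condition on $(u,v)$. So the proof is a dictionary argument, and the key is to use the explicit inverse formulas \eqref{réciproque}--\eqref{réciproque2} relating $(u,v)$ and $P=\chi(u,v)$, namely $u_i=\#\{j>i:x_j\lhd x_i\}$ and $v_j=\#\{i<j:x_i\lhd x_j\}$, together with the fact that by the interval-poset axioms these "descendant" and "ancestor" sets are intervals of consecutive indices.

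First I would recall the definition of a new interval-poset from~\cite{Rog18}: typically it asks that $P$ has no "trivial" relations of a certain shape — concretely, that $x_{i+1}\not\lhd x_i$ is forbidden when $x_i$ is not the image of a certain minimal element (first condition), that symmetrically $x_{j-1}\not\lhd x_j$ is forbidden in the dual situation (second condition), and a condition preventing a "long" relation $x_k\lhd x_l$ or $x_l\lhd x_k$ spanning the full gap from $k$ to $l$ when a shorter one would already force it (third condition). Then, step by step: using $u_i=\#\{j>i:x_j\lhd x_i\}$ and the fact that this set is $\{i+1,\dots,i+u_i\}$, the bound $u_i\le n-i-1$ is equivalent to saying that the decreasing chain of successors of $x_i$ to the right does not reach all the way to $x_n$; this is precisely the first condition on $P$. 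Dually, $v_j=\#\{i<j:x_i\lhd x_j\}$ with the set equal to $\{j-v_j,\dots,j-1\}$ gives $v_j\le j-2$ as the translation of the second condition. For \ref{ditnv3}, I would spell out that $u_k<l-k-1$ means $x_l\ntriangleleft x_k$ unless the relation is already implied by the interval-poset axiom from a closer pair, and symmetrically for $v_l<l-k-1$ and $x_k\ntriangleleft x_l$; combining these and quantifying over all $k+1<l$ matches the third condition of the original definition verbatim.

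The two remaining ingredients are routine but worth stating: (a) that the map $\rho\circ\chi$ from Tamari interval diagrams to Tamari intervals is consistent with the bijection $\rho$ of~\cite{CP15} used to define new intervals, so that "new interval-poset" is the correct target notion — this is immediate since $\chi$ is the bijection of the first Theorem of the excerpt; and (b) the combinatorial observation, already noted after Definition~\ref{compatibleDT}, that for each $i$ either $u_i=0$ or $v_{i+1}=0$, which guarantees there is no conflict when reconstructing $(u,v)$ from $P$ via \eqref{réciproque}--\eqref{réciproque2}. With these in hand, the proof is a clean "if and only if" obtained condition by condition.

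**Main obstacle.** The real work is bookkeeping: matching the indexing conventions of~\cite{Rog18} (where the "new" conditions are phrased on the interval-poset or on the pair of trees $[S,T]$) with the index shifts built into $\chi$ and into cubic coordinates, in particular the off-by-one shifts $u_i\leftrightarrow v_{i+1}$ and the ranges $i\in[n-1]$, $j\in\{2,\dots,n\}$ appearing in Definition~\ref{ditnouveau}. I expect essentially all the difficulty to be in verifying that condition~\ref{ditnv3}, with its symmetric pair of strict inequalities $u_k<l-k-1$ or $v_l<l-k-1$, is exactly the contrapositive of the corresponding long-relation-exclusion in~\cite{Rog18}; once the dictionary between $\lhd$-relations and the letters $u_i,v_j$ is set up carefully, conditions~\ref{ditnv1} and~\ref{ditnv2} follow with little effort, and the equivalence claimed just before the Proposition (original definition $\Leftrightarrow$ Definition~\ref{ditnouveau}) does the rest.
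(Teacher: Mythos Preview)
Your plan is correct and matches the paper's own (very terse) treatment: the paper gives no detailed proof of this proposition, only the sentence preceding it, namely that ``the three conditions of this original definition imply the three conditions of Definition~\ref{ditnouveau} and reciprocally,'' which is exactly the dictionary argument you propose to carry out via the formulas \eqref{réciproque}--\eqref{réciproque2}. One small remark: you write that the excerpt ``already tells us'' the equivalence and that the task then ``reduces'' to a translation, but that sentence \emph{is} the whole proof sketch in this extended abstract --- the translation you outline is precisely its content, not a separate step on top of it.
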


In~\cite{Rog18}, Rognerud shows that an interval-poset $P$ is new if and only if $\rho(P)$ is a new Tamari interval (see~\cite{Cha17} for more about this notion). Then, one has the following result.

\begin{Corollary}
Let $(u,v)\in\dit$. The Tamari interval diagram $(u,v)$ is new if and only if $\rho(\chi(u,v))$ is a new Tamari interval. 
\end{Corollary}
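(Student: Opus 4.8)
The plan is to obtain this statement as an immediate composition of two equivalences that are already at our disposal, so no new combinatorial work is needed. Set $P = \chi(u,v)$, which is an interval-poset of size $n$ by the first theorem of Section~\ref{Part2}. By Proposition~\ref{ditnv=intnv}, the Tamari interval diagram $(u,v)$ is new if and only if $P$ is a new interval-poset. By the result of Rognerud~\cite{Rog18} recalled just above the statement, the interval-poset $P$ is new if and only if $\rho(P)$ is a new Tamari interval. Chaining these two ``if and only if'' statements yields that $(u,v)$ is new $\iff$ $\rho(\chi(u,v)) = \rho(P)$ is a new Tamari interval, which is precisely the claim.

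Concretely, I would write: ``Let $(u,v)\in\dit$ and set $P=\chi(u,v)$. By Proposition~\ref{ditnv=intnv}, $(u,v)$ is new if and only if $P$ is a new interval-poset; by~\cite{Rog18}, $P$ is a new interval-poset if and only if $\rho(P)$ is a new Tamari interval. Composing these equivalences gives the result.''

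The only point that genuinely requires care is hidden inside Proposition~\ref{ditnv=intnv}, namely verifying that the three conditions of Definition~\ref{ditnouveau} on a pair $(u,v)$ correspond, under $\chi$, exactly to the defining conditions of new interval-posets from~\cite{Rog18} (in particular matching the quantifier ranges $i\in[n-1]$, $j\in\{2,\dots,n\}$, and $k+1<l$ with Rognerud's indexing of the poset relations). Since Proposition~\ref{ditnv=intnv} is already available, the Corollary itself presents no further obstacle; one merely has to invoke it together with~\cite{Rog18}. The ``hard part'', in other words, was entirely absorbed into the preceding proposition, and there is nothing left to do here beyond the composition of bijections.
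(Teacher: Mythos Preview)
Your argument is correct and matches the paper's own treatment: the Corollary is stated without proof, immediately after recalling Rognerud's result, as the composition of Proposition~\ref{ditnv=intnv} with~\cite{Rog18}. Your write-up makes this explicit and there is nothing to add.
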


\begin{Proposition}\label{synnn}
Let $(u,v)\in\dit$. If $(u,v)$ is synchronized then $(u,v)$ is not new.
\end{Proposition}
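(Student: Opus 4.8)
The plan is to argue by contradiction: assume $(u,v)$ is at once synchronized and new, and show that the synchronization condition forces a chain of strictly positive entries in $u$ that is too long to coexist with condition~\ref{condDT2} of Definition~\ref{diagTam}. We may assume $n\ge 2$, so that $1\in[n-1]$.

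First I would analyze the left end. Synchronization at $i=1$ gives $u_1\ne 0$ or $v_2\ne 0$, whereas condition~\ref{ditnv2} of Definition~\ref{ditnouveau} forces $v_2\le 0$, hence $v_2=0$ and therefore $u_1\ge 1$. Set $a:=u_1$. Condition~\ref{ditnv1} gives $a\le n-2$, so $a+1\le n-1$ and $a+2\le n$; this is the only use of~\ref{ditnv1}, and it is what keeps all indices occurring below inside $[n]$ (resp.\ inside $[n-1]$ whenever the synchronization condition is invoked). In particular, for $n=2$ the inequalities $u_1\ge 1$ and $u_1\le n-2=0$ are already contradictory, so from here on the induction below is only needed when $n\ge 3$.

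The core step is an induction on $j\in\{1,\dots,a+1\}$ proving $u_j\ge 1$. The base case $j=1$ is $u_1=a\ge 1$. For the inductive step, fix $2\le j\le a+1$ and assume $u_{j-1}\ge 1$. Apply condition~\ref{ditnv3} of Definition~\ref{ditnouveau} to the pair $(k,l)=(j-1,\,j+1)$: here $k+1<l$ and $l-k-1=1$, so the condition reads ``$u_{j-1}<1$ or $v_{j+1}<1$'', and since $u_{j-1}\ge 1$ it yields $v_{j+1}=0$. Then synchronization at index $j$ gives $u_j\ne 0$ or $v_{j+1}\ne 0$, and since $v_{j+1}=0$ we conclude $u_j\ge 1$, completing the induction.

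Finally I would close the argument: the induction gives $u_{a+1}\ge 1$, while condition~\ref{condDT2} of Definition~\ref{diagTam} applied with $i=1$ and $j=a$ (valid since $0\le a\le u_1=a$) forces $u_{1+a}\le u_1-a=0$, contradicting $u_{a+1}\ge 1$. Hence $(u,v)$ cannot be simultaneously synchronized and new. The one genuinely non-routine point --- the part I expect to be the main obstacle to spot --- is the choice of window $(k,l)=(j-1,j+1)$ in~\ref{ditnv3}: synchronization turns each forced vanishing $v_{j+1}=0$ into a forced positivity $u_j>0$, and~\ref{ditnv3} allows one to force these vanishings one after another, so the staircase of positive $u$-entries is carried all the way out to index $a+1$, exactly where it collides with the bound $u_{1+a}\le u_1-a=0$ coming from~\ref{condDT2}. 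The rest is bookkeeping on signs and index ranges.
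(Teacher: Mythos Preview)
Your proof is correct. The paper only provides a one-line sketch (``suppose synchronized and new, use condition~\ref{ditnv3} of Definition~\ref{ditnouveau} to reach a contradiction''), and your argument is a valid way to flesh this out: you use~\ref{ditnv2} to force $v_2=0$, then propagate positivity of the $u_j$'s via~\ref{ditnv3}, and finally collide with the staircase bound~\ref{condDT2} of Definition~\ref{diagTam}.

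That said, there is a shorter route, arguably closer in spirit to the paper's hint, that avoids both the induction and any appeal to Definition~\ref{diagTam}. After establishing $u_1\ge 1$ from~\ref{ditnv2} and synchronization, observe that~\ref{ditnv1} forces $u_{n-1}=0$. Hence there is some $i\in[n-2]$ with $u_i\ge 1$ and $u_{i+1}=0$; synchronization at index $i+1$ then gives $v_{i+2}\ge 1$. Now a single application of~\ref{ditnv3} with $(k,l)=(i,i+2)$ yields $u_i<1$ or $v_{i+2}<1$, contradicting both inequalities at once. Your approach trades this existence-of-a-transition argument for an explicit propagation along the initial block, at the cost of invoking the Tamari-diagram condition; both are perfectly fine, but the transition-point argument isolates~\ref{ditnv3} as the sole obstruction, which is what the paper's sketch emphasizes.
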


\begin{proof}
Let us suppose that $(u,v)$ is synchronized and new. Then, by using~\ref{ditnv3} from Definition~\ref{ditnouveau}, we come to a contradiction.
\end{proof}

\section{Isomorphism of posets}\label{Part4}

In this section, we define the poset of cubic coordinates and we show that there is an isomorphism between this poset and the poset of Tamari intervals. To this aim, both bijections seen in Section~\ref{Part2} and in Section~\ref{Part3} are used.
\smallbreak

Let $c, c'\in\cc$. We set $c\leq_{\mathrm{cc}} c'$ if and only if $c_i\leq c'_i$ for all $i\in[n-1]$.
The set of cubic coordinates of size $n$ endowed with the binary relation $\leq_{\mathrm{cc}}$ is a poset, the \Def{cubic coordinate poset}.

Let us denote by $\lessdot$ the covering relation of $(\cc,\leq_{\mathrm{cc}})$. We admit here the following result.

\begin{Lemma}
Let $c,c'\in\cc$ such that $c\leq_{\mathrm{cc}}c'$. Then, 
$c\lessdot c'$ if and only if there is exactly one $i\in[n-1]$ such that $c_i < c'_i$, and if there is a $c''\in\cc$ such that $c\leq_{\mathrm{cc}}c''\leq_{\mathrm{cc}}c'$, then either $c = c''$ or $c' =c''$.
\end{Lemma}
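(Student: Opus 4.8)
One direction is immediate: if $c$ and $c'$ differ in exactly one coordinate and no element of $\cc$ lies strictly between them, then $c\lessdot c'$ by the very definition of a covering relation (here $c\leq_{\mathrm{cc}}c'$ and $c\neq c'$). For the other direction, $c\lessdot c'$ already means ``$c<_{\mathrm{cc}}c'$ with nothing of $\cc$ strictly between'', so what remains is to see that a cover cannot differ in two or more coordinates. Thus the whole statement reduces to the following claim, which is what I would actually prove: \emph{if $c,c'\in\cc$ with $c\leq_{\mathrm{cc}}c'$ differ in at least two coordinates, then there is a $c''\in\cc$ with $c<_{\mathrm{cc}}c''<_{\mathrm{cc}}c'$.}

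To prove the claim I would pass to Tamari interval diagrams via Theorem~\ref{bijDIT-CC}, setting $(u,v)=\phi(c)$ and $(u',v')=\phi(c')$. Since $t\mapsto\max(t,0)$ is nondecreasing and $t\mapsto|\min(t,0)|$ is nonincreasing, the hypothesis $c\leq_{\mathrm{cc}}c'$ becomes $u\leq u'$ and $v'\leq v$ entrywise. The idea that makes everything work cleanly, and that sidesteps any case analysis on the compatibility condition of Definition~\ref{compatibleDT}, is that it suffices to alter the Tamari diagram \emph{alone} or the dual Tamari diagram \emph{alone}: if $\bar u$ is any Tamari diagram with $u\leq\bar u\leq u'$, then $(\bar u,v')\in\dit$, because $v'$ is a dual Tamari diagram and compatibility is inherited from $(u',v')$ (the condition $j-i\leq\bar u_i$ forces $j-i\leq u'_i$, hence $v'_j<j-i$); symmetrically $(u,\bar v)\in\dit$ for any dual Tamari diagram $\bar v$ with $v'\leq\bar v\leq v$, compatibility now being inherited from $(u,v)$. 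In either case $\phi^{-1}$ of that pair is a cubic coordinate lying weakly between $c$ and $c'$, and it is strictly between them as soon as the altered pair differs from both $\phi(c)$ and $\phi(c')$.

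I would then split into cases according to how $(u,v)\neq(u',v')$. If $u\neq u'$ and $v\neq v'$, the cubic coordinate $\phi^{-1}(u,v')$ already works. If $v=v'$, then $c_i\neq c'_i\iff u_i\neq u'_i$, so $u$ and $u'$ differ in at least two positions; let $p$ be the \emph{largest} index with $u_p<u'_p$ and let $\bar u$ be $u'$ with its $p$-th entry replaced by $u_p$ --- then $u\leq\bar u\leq u'$ with $\bar u\neq u'$ (they differ at $p$) and $\bar u\neq u$ (they differ at a second index where $u<u'$), and $\phi^{-1}(\bar u,v')$ is the desired $c''$. The case $u=u'$ is symmetric: lower $v$ at its \emph{smallest} differing index and invoke the reversal symmetry between Definitions~\ref{diagTam} and~\ref{diagTamDual} noted after Definition~\ref{diagTamDual}.

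The one step that needs genuine care --- and the main obstacle --- is verifying that the truncated word $\bar u$ still obeys condition~\ref{condDT2} of Definition~\ref{diagTam} (and likewise $\bar v$ and~\ref{condDTD2}). This is exactly why $p$ must be chosen rightmost: since $u_k=u'_k$ for all $k>p$, every inequality $\bar u_{p+j}\leq\bar u_p-j$ that must be checked is literally an instance of~\ref{condDT2} for $u$, while every inequality with $p$ on the right-hand side is inherited unchanged from $u'$. An arbitrary choice of the changed index, or a naive single-coordinate modification of $c$ or $c'$ directly --- which in general would perturb both $u$ and $v$ at once and reopen the compatibility condition --- need not produce a valid cubic coordinate, so the point is precisely to keep one of the two words fixed throughout.
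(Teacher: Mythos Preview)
The paper does not actually prove this lemma: it is explicitly \emph{admitted} (``We admit here the following result''), as is common in extended abstracts. So there is no proof in the paper to compare yours against.

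That said, your argument is correct and is a natural way to fill the gap. The reduction to ``two differing coordinates $\Rightarrow$ something strictly between'' is the right reformulation, and passing through $\phi$ to work with the pair $(u,v)$ is clean. Your key observation---that compatibility for $(\bar u,v')$ is inherited from $(u',v')$ whenever $\bar u\leq u'$, and dually for $(u,\bar v)$---is exactly what makes the construction painless. The choice of the \emph{rightmost} differing index $p$ when modifying $u$ (and the leftmost when modifying $v$) is the correct device to preserve condition~\ref{condDT2} (resp.~\ref{condDTD2}): the inequalities with base index $p$ reduce to those of $u$ via $u'_{p+j}=u_{p+j}$ for $j\geq1$, and the inequalities hitting $p$ from the left only get weaker since $u_p<u'_p$. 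One minor wording issue: your phrase ``every inequality with $p$ on the right-hand side is inherited unchanged from $u'$'' is slightly ambiguous---the inequality is not literally the same as for $u'$, but rather implied by it because $\bar u_p=u_p<u'_p\leq u'_i-j$. This is a cosmetic point; the mathematics is fine.
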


We denote by $\leq_{\mathrm{t}}$ the Tamari order on the set of binary trees~\cite{Tam62}: for $S$ and $T$, two binary trees of size $n$, $S \leq_{\mathrm{t}} T$ if and only if $T$ can be obtained by performing an arbitrary number of rotations from $S$. If $T$ is obtained by only one rotation in $S$, then $T$ covers $S$. Let $[S,T], [S',T']\in \itam$. We set $[S,T] \leq_{\mathrm{ti}}[S',T']$ if and only if $S \leq_{\mathrm{t}} S'$ and $ T\leq_{\mathrm{t}}T'$. Then, $S'$ covers $S$ and $T=T'$ or $T'$ covers $T$ and $S=S'$ if and only if $[S',T']$ covers $[S,T]$.

Let $\psi = \phi^{-1} \circ \chi^{-1} \circ \rho^{-1}$ be the application from the set of Tamari intervals to the set of cubic coordinates.

\begin{Theorem}\label{morphPoset}
The application $\psi$ is an isomorphism of posets.
\end{Theorem}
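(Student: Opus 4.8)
The plan is to show that $\psi = \phi^{-1} \circ \chi^{-1} \circ \rho^{-1}$ is both order-preserving and order-reflecting, which together with the already-established bijectivity of $\phi$, $\chi$, and $\rho$ yields that $\psi$ is a poset isomorphism. Since $\psi$ is a bijection between finite posets, it suffices to prove that $\psi$ maps covering relations to covering relations and that $\psi^{-1}$ does likewise; equivalently, since both posets are finite and ranked by the same cardinality, one can even get away with showing only one direction together with a counting/rank argument, but I prefer to handle covers in both directions directly to avoid relying on a rank function.

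First I would fix a Tamari interval $[S,T] \lessdot_{\mathrm{ti}} [S',T']$ and, using the description of covers in $\itam$ recalled just before the theorem, split into the two symmetric cases: either $T = T'$ and $S'$ covers $S$ in the Tamari order, or $S = S'$ and $T'$ covers $T$. I would then track what a single rotation does through the chain of bijections. Passing through $\rho^{-1}$, a Tamari cover on the left (resp. right) endpoint corresponds to a known local modification of the interval-poset (adding or removing one relation in a controlled way, as described in~\cite{CP15}); passing through $\chi^{-1}$, formulas~\eqref{réciproque} and~\eqref{réciproque2} translate this into a change in exactly one letter $u_i$ or one letter $v_j$ of the Tamari interval diagram; finally, via $\phi^{-1}$, changing a single letter $u_i$ corresponds (by Definition~\ref{coordCubic} and the compatibility constraint that $u_i = 0$ or $v_{i+1} = 0$) to increasing the single cubic coordinate entry $c_i$, and changing a single letter $v_{i+1}$ corresponds to decreasing $c_i$. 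In both cases $\psi([S,T])$ and $\psi([S',T'])$ differ in exactly one coordinate, with the inequality going the right way, so by the Lemma characterizing $\lessdot$ we get $\psi([S,T]) \lessdot \psi([S',T'])$, provided no cubic coordinate strictly between them exists — and that follows because any intermediate $c''$ would pull back through the bijections to an intermediate Tamari interval, contradicting that $[S,T] \lessdot_{\mathrm{ti}} [S',T']$.

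For the reverse direction, I would start from $c \lessdot c'$ in $\cc$, invoke the Lemma to know that $c$ and $c'$ agree in all but one entry $i$ with $c_i < c'_i$, and then run the same dictionary backwards: the single-entry change becomes a single-letter change in $(u,v)$, hence (by the characterization in~\cite{CP15}) a single-rotation change of exactly one endpoint of the corresponding Tamari interval, giving $\psi^{-1}(c) \lessdot_{\mathrm{ti}} \psi^{-1}(c')$. Together with the forward direction this establishes that $\psi$ is an isomorphism.

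The main obstacle I anticipate is the bookkeeping in the step ``single rotation $\leftrightarrow$ single-letter change of the Tamari interval diagram.'' A rotation in a binary tree is a genuinely local move on the tree, but under the bijection to Tamari diagrams~\cite{Pal86} and then to interval-posets~\cite{CP15} it is not a priori obvious that the induced change in the words $u$ and $v$ touches only one coordinate; one has to verify carefully, using conditions~\ref{condDT2} and~\ref{condDTD2} and the compatibility condition of Definition~\ref{compatibleDT}, that the perturbation is confined to a single index and that the ``no element strictly between'' clause transfers correctly. I would isolate this as the technical heart of the argument, treat the $S$-cover and $T$-cover cases by the evident left-right symmetry (reversal exchanges Tamari and dual Tamari diagrams, and correspondingly sends $c_i \mapsto -c_{n-i}$ up to reindexing), and relegate the remaining verifications to the routine checking of the axioms in Definitions~\ref{diagTam}, \ref{diagTamDual}, and~\ref{compatibleDT}.
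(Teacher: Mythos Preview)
Your plan matches the paper's: show that covers correspond to covers in both directions, factoring through interval-posets. The paper formalizes the intermediate step by introducing two conditions $(\star)$ and $(\diamond)$ on pairs $P,P'$ of interval-posets --- $P'$ is obtained from $P$ by adding (resp.\ removing) a \emph{minimal packet} of decreasing (resp.\ increasing) relations all ending at one fixed vertex $x_i$ --- then admits (citing~\cite{CP15}) that $(\star)/(\diamond)$ is equivalent to a single rotation on $S$ (resp.\ $T$), and proves directly that $(\star)/(\diamond)$ is equivalent to $c \lessdot c'$. Your sketch of ``single rotation $\leftrightarrow$ single-letter change'' is this same content, and your anticipated obstacle is the right one; just be precise that a rotation may add or remove \emph{several} relations, all terminating at the same vertex, which is why a single letter $u_i$ or $v_{i+1}$ can jump by more than~$1$.

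One point you are glossing over: in the direction $c \lessdot c' \Rightarrow$ Tamari cover, you assert that ``the single-entry change becomes a single-letter change in $(u,v)$'', but this is not automatic. If $c_i < 0 < c'_i$ then $(u_i,v_{i+1})$ goes from $(0,\,-c_i)$ to $(c'_i,\,0)$, so \emph{both} letters change. The paper excludes this via Lemma~\ref{lemmeutilepartout}: replacing any nonzero entry of a cubic coordinate by $0$ again yields a cubic coordinate, so a sign change across $0$ would produce a strict intermediate and contradict $c \lessdot c'$. You need to invoke this lemma (or reprove it) before your backward dictionary can be applied.
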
 

\begin{proof}
Let $[S,T], [S',T']\in \itam$, and $\psi([S,T]) = c$, $\psi([S',T']) = c'$.
Then, let $\phi(c) = (u,v)$, $\phi(c') = (u',v')$ and $\chi(u,v) = P$, $\chi(u',v') = P'$. 
Let us show that $[S',T']$ covers $[S,T]$ in $(\itam , \leq_{\mathrm{ti}})$ if and only if $c'$ covers $c$ in $(\cc, \leq_{\mathrm{cc}})$.

Let $(\star)$ (resp. $(\diamond)$) be the following condition: $P'$ is obtained from $P$ by only adding (resp. removing) some decreasing (resp. increasing) relations ending at a vertex $x_i$, such that if we remove (resp. add) one of these decreasing (resp. increasing) relations, then either we obtain $P$ or the resulting object is not an interval-poset.

We admit here that $P$ and $P'$ satisfy $(\star)$ (resp. $(\diamond)$) for the vertex $x_i$ if and only if $S'$ (resp. $T'$) is obtained by a unique rotation of the node of index $i$ in $S$ (resp. $T$) and $T' = T$ (resp. $S' = S$). In other words, $P$ and $P'$ satisfy either $(\star)$ or $(\diamond)$ if and only if $[S',T']$ covers $[S,T]$. It only remains to show that $c'$ covers $c$ with $c_i < c'_i$ if and only if $P$ and $P'$ satisfy either $(\star)$ or $(\diamond)$ for the vertex $x_i$.
 
We assume that $c\lessdot c'$ with $c_i < c'_i$. Then, there are two cases:

\begin{enumerate}[label=({\arabic*})]
\item If $c'_i$ is positive, then $c_{i}$ is not negative due to the Lemma~\ref{lemmeutilepartout}.
Hence $c'_{i}= u'_{i}$ and $c_{i}= u_{i}$. The image by $\phi$ of $c$ and of $c'$ differs only for the letter $u_{i}$. 
Besides, the fact that $c\lessdot c'$ implies in particular that if there is a word $u''$ of size $n$ such that $u''_i = u'_{i}-1$ and $u''_j = u'_{j}$ for any $j\ne i$, then either $(u'',v') = (u,v)$ or $(u'',v')$ is not a Tamari interval diagram. Indeed, let us suppose that there is a Tamari interval diagram $(u'',v')$ such as previously described, different from $(u,v)$. Then, let $c''$ be the cubic coordinate associated with $(u'',v')$ by $\phi^{-1}$. Since $u''_{i}=u'_{i}-1$ and $u''_j = u'_{j}$ for any $j\ne i$, one has $c'' \leq_{\mathrm{cc}} c'$. Moreover, since $u''\neq u$, one has $c \leq_{\mathrm{cc}} c''$. Knowing these two facts, we have a contradiction with our hypothesis $c\lessdot c'$.
\newpage
\noindent
The difference of a unique letter $u_{i}$ between $(u,v)$ and $(u',v')$ is directly translated by $\chi$: The interval-poset $P'$ has more decreasing relations ending at $x_i$ than the vertex $x_i$ has in $P$. Moreover, the fact that there is no other Tamari interval diagram between $(u,v)$ and $(u',v')$ implies that the number of decreasing relations added in $P'$ compared to $P$ is minimal. This means that if one decreasing relation ending at $x_i$ is removed from $P'$, then either we obtain $P$ or the resulting object is not an interval-poset. Hence $P$ and $P'$ satisfy $(\star)$.

\item Symmetrically, if $c'_i$ is nonpositive, then by using the same arguments, we obtain that the interval-poset $P'$ has less increasing relations ending at $x_{i+1}$ than the vertex $x_{i+1}$ from $P$, in a minimal way. This leads us to the fact that $P$ and $P'$ satisfy $(\diamond)$. 
\end{enumerate}

Reciprocally, suppose that $P$ and $P'$ satisfy either $(\star)$ or $(\diamond)$ for a vertex $x_i$.

\begin{enumerate}[label=({\arabic*})]
\item\label{Casdécroiss} Suppose that $P$ and $P'$ satisfy $(\star)$. Since only decreasing relations ending at $x_i$ are added in $P'$, then only the letter $u'_i$ from $u'$ is increased compared to $u$, and $v' = v$. Besides, since the number of decreasing relations added in $P$ is minimal, there is no Tamari interval diagram between $(u,v)$ and $(u',v')$, and so there is no cubic coordinate between $c$ and $c'$. Hence $c \lessdot c'$.

\item\label{Cascroiss} Suppose that $P$ and $P'$ satisfy $(\diamond)$. Since only increasing relations ending at $x_i$ are removed in $P'$, then only the letter $v'_i$ from $v'$ is decreased compared to $v$, and $u' = u$. Then, just like in case \ref{Casdécroiss}, $c \lessdot c'$.
\end{enumerate}

One can now conclude that $\psi$ is an isomorphism of posets.
\end{proof}

To sum up the applications seen in Sections~\ref{Part2}, \ref{Part3} and~\ref{Part4}, let us recall that $\psi = \phi^{-1} \circ \chi^{-1} \circ \rho^{-1}$. Then, we have the following diagram of poset isomorphisms:
$$\begin{tikzcd}
\dit \arrow{r}{\chi} & \ip \arrow{d}{\rho} \\
\cc \arrow[swap]{u}{\phi} & \itam \arrow[swap]{l}{\psi}
\end{tikzcd}$$

One consequence of the isomorphism $\psi$ is that the order dimension~\cite{Tro02} of the poset of Tamari intervals is at most equal to $n-1$.
\section{Cubic realization and cells}\label{Part5}
Now, we regard the poset of cubic coordinates defined in
Section~\ref{Part4} as a natural geometric object. 
We study this geometric realization by giving a theoretical definition of the cells it contains.
\smallbreak

All cubic coordinates of size $n$ can be placed in the space $\mathbb{R}^{n-1}$, as space coordinates.
For all cubic coordinates $c$ and $c'$ such that $c\leq_{\mathrm{cc}}c'$, we connect $c$ to $c'$ with an arrow if and only if there is no other cubic coordinate $c''$ such that $c\leq_{\mathrm{cc}}c''\leq_{\mathrm{cc}}c'$, meaning that just one entry increases between $c$ and $c''$. This oriented graph is the \Def{cubic realization} of the cubic coordinate lattice.

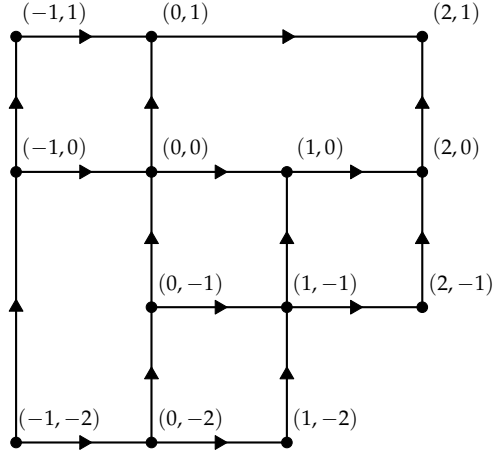
\begin{figure}[h!]
\centerline{\begin{tikzpicture}[line cap=round,line join=round,>=triangle 45,x=1.8cm,y=1.8cm]
\clip(-1.6,-2.1) rectangle (2.7,1.3);
\draw [line width=0.8pt] (0,0)-- (1,0);
\draw [line width=0.8pt] (1,0)-- (2,0);
\draw [line width=0.8pt] (2,0)-- (2,-1);
\draw [line width=0.8pt] (2,-1)-- (1,-1);
\draw [line width=0.8pt] (1,-1)-- (1,0);
\draw [line width=0.8pt] (1,-1)-- (0,-1);
\draw [line width=0.8pt] (0,-1)-- (0,-2);
\draw [line width=0.8pt] (0,-2)-- (1,-2);
\draw [line width=0.8pt] (1,-2)-- (1,-1);
\draw [line width=0.8pt] (0,-2)-- (-1,-2);
\draw [line width=0.8pt] (-1,-2)-- (-1,0);
\draw [line width=0.8pt] (-1,0)-- (0,0);
\draw [line width=0.8pt] (0,0)-- (0,1);
\draw [line width=0.8pt] (0,1)-- (-1,1);
\draw [line width=0.8pt] (-1,1)-- (-1,0);
\draw [line width=0.8pt] (0,1)-- (2,1);
\draw [line width=0.8pt] (2,1)-- (2,0);
\draw [line width=0.8pt] (0,0)-- (0,-1);
\begin{scriptsize}
\draw [fill=black] (0,0) circle (2pt);
\draw[color=black] (0.25111111111110807,0.17333333333334033) node {$(0, 0)$};
\draw [fill=black] (1,0) circle (2pt);
\draw[color=black] (1.2466666666666646,0.17333333333334033) node {$(1, 0)$};
\draw [fill=black] (2,0) circle (2pt);
\draw[color=black] (2.25111111111111,0.17333333333334033) node {$(2, 0)$};
\draw [fill=black] (2,-1) circle (2pt);
\draw[color=black] (2.2866666666666657,-0.8311111111111096) node {$(2, -1)$};
\draw [fill=black] (1,-1) circle (2pt);
\draw[color=black] (1.2822222222222202,-0.8311111111111096) node {$(1, -1)$};
\draw [fill=black] (0,-1) circle (2pt);
\draw[color=black] (0.2866666666666637,-0.8311111111111096) node {$(0, -1)$};
\draw [fill=black] (0,-2) circle (2pt);
\draw[color=black] (0.2866666666666637,-1.8266666666666704) node {$(0, -2)$};
\draw [fill=black] (1,-2) circle (2pt);
\draw[color=black] (1.2822222222222202,-1.8266666666666704) node {$(1, -2)$};
\draw [fill=black] (-1,-2) circle (2pt);
\draw[color=black] (-0.6822222222222262,-1.8266666666666704) node {$(-1, -2)$};
\draw [fill=black] (-1,0) circle (2pt);
\draw[color=black] (-0.7177777777777817,0.17333333333334033) node {$(-1, 0)$};
\draw [fill=black] (0,1) circle (2pt);
\draw[color=black] (0.25111111111110807,1.1688888888889015) node {$(0, 1)$};
\draw [fill=black] (-1,1) circle (2pt);
\draw[color=black] (-0.7177777777777817,1.1688888888889015) node {$(-1, 1)$};
\draw [fill=black] (2,1) circle (2pt);
\draw[color=black] (2.25111111111111,1.1688888888889015) node {$(2, 1)$};
\draw [fill=black,shift={(-1,0.5)}] (0,0) ++(0 pt,3pt) -- ++(2.598076211353316pt,-4.5pt)--++(-5.196152422706632pt,0 pt) -- ++(2.598076211353316pt,4.5pt);
\draw [fill=black,shift={(-0.5,1)},rotate=270] (0,0) ++(0 pt,3pt) -- ++(2.598076211353316pt,-4.5pt)--++(-5.196152422706632pt,0 pt) -- ++(2.598076211353316pt,4.5pt);
\draw [fill=black,shift={(1,1)},rotate=270] (0,0) ++(0 pt,3pt) -- ++(2.598076211353316pt,-4.5pt)--++(-5.196152422706632pt,0 pt) -- ++(2.598076211353316pt,4.5pt);
\draw [fill=black,shift={(2,0.5)}] (0,0) ++(0 pt,3pt) -- ++(2.598076211353316pt,-4.5pt)--++(-5.196152422706632pt,0 pt) -- ++(2.598076211353316pt,4.5pt);
\draw [fill=black,shift={(1.5,0)},rotate=270] (0,0) ++(0 pt,3pt) -- ++(2.598076211353316pt,-4.5pt)--++(-5.196152422706632pt,0 pt) -- ++(2.598076211353316pt,4.5pt);
\draw [fill=black,shift={(0.5,0)},rotate=270] (0,0) ++(0 pt,3pt) -- ++(2.598076211353316pt,-4.5pt)--++(-5.196152422706632pt,0 pt) -- ++(2.598076211353316pt,4.5pt);
\draw [fill=black,shift={(-0.5,0)},rotate=270] (0,0) ++(0 pt,3pt) -- ++(2.598076211353316pt,-4.5pt)--++(-5.196152422706632pt,0 pt) -- ++(2.598076211353316pt,4.5pt);
\draw [fill=black,shift={(-1,-1)}] (0,0) ++(0 pt,3pt) -- ++(2.598076211353316pt,-4.5pt)--++(-5.196152422706632pt,0 pt) -- ++(2.598076211353316pt,4.5pt);
\draw [fill=black,shift={(0,-1.5)}] (0,0) ++(0 pt,3pt) -- ++(2.598076211353316pt,-4.5pt)--++(-5.196152422706632pt,0 pt) -- ++(2.598076211353316pt,4.5pt);
\draw [fill=black,shift={(1,-1.5)}] (0,0) ++(0 pt,3pt) -- ++(2.598076211353316pt,-4.5pt)--++(-5.196152422706632pt,0 pt) -- ++(2.598076211353316pt,4.5pt);
\draw [fill=black,shift={(0.5,-1)},rotate=270] (0,0) ++(0 pt,3pt) -- ++(2.598076211353316pt,-4.5pt)--++(-5.196152422706632pt,0 pt) -- ++(2.598076211353316pt,4.5pt);
\draw [fill=black,shift={(1,-0.5)}] (0,0) ++(0 pt,3pt) -- ++(2.598076211353316pt,-4.5pt)--++(-5.196152422706632pt,0 pt) -- ++(2.598076211353316pt,4.5pt);
\draw [fill=black,shift={(1.5,-1)},rotate=270] (0,0) ++(0 pt,3pt) -- ++(2.598076211353316pt,-4.5pt)--++(-5.196152422706632pt,0 pt) -- ++(2.598076211353316pt,4.5pt);
\draw [fill=black,shift={(2,-0.5)}] (0,0) ++(0 pt,3pt) -- ++(2.598076211353316pt,-4.5pt)--++(-5.196152422706632pt,0 pt) -- ++(2.598076211353316pt,4.5pt);
\draw [fill=black,shift={(0.5,-2)},rotate=270] (0,0) ++(0 pt,3pt) -- ++(2.598076211353316pt,-4.5pt)--++(-5.196152422706632pt,0 pt) -- ++(2.598076211353316pt,4.5pt);
\draw [fill=black,shift={(-0.5,-2)},rotate=270] (0,0) ++(0 pt,3pt) -- ++(2.598076211353316pt,-4.5pt)--++(-5.196152422706632pt,0 pt) -- ++(2.598076211353316pt,4.5pt);
\draw [fill=black,shift={(0,0.5)}] (0,0) ++(0 pt,3pt) -- ++(2.598076211353316pt,-4.5pt)--++(-5.196152422706632pt,0 pt) -- ++(2.598076211353316pt,4.5pt);
\draw [fill=black,shift={(0,-0.5)}] (0,0) ++(0 pt,3pt) -- ++(2.598076211353316pt,-4.5pt)--++(-5.196152422706632pt,0 pt) -- ++(2.598076211353316pt,4.5pt);
\end{scriptsize}
\end{tikzpicture}}
\caption{\footnotesize Cubic realization of $\mathcal{CC}_3$.}
\label{RealCube}
\end{figure}

Figure~\ref{RealCube} is the cubic realization of $\mathcal{CC}_3$, where the elements of $\mathcal{CC}_3$ are vertices and the cover relations are arrows orientated to the covering cubic coordinates.

\begin{definition}\label{augMin}
Let $c\in\cc$. Suppose that there is $c'\in \cc$ such that $c'_i > c_i$ and $c'_j = c_j$ for all $j\ne i$, with $i,j\in [n-1]$.
We define then the application of \Def{minimal increase} $\uparrow_{i}$ as follows
\begin{equation}
\uparrow_{i}(c) = (c_{1},\dots,c_{i-1},\widehat{c}_{i},c_{i+1},\dots,c_{n-1}),
\end{equation}
such that $c ~\lessdot\uparrow_{i}(c)$ and $c_i < \widehat{c}_{i} \leq c'_i$.
\end{definition}

\begin{definition}
Let $c\in\cc$. We say that $c$ is \Def{minimal-cellular} if for all $i\in[n-1]$, $\uparrow_i(c)$ is well-defined.
\end{definition}

We notice that the cubic coordinates which are minimal-cellular are the elements that are covered by exactly $n-1$ elements in $(\cc, \leq_{\mathrm{cc}})$.

\begin{Lemma}\label{existeMax}
Let $c$ be minimal-cellular cubic coordinate of size $n$ and $i\in[n-1]$. If
\begin{equation} \label{equ:existeMax}
	c' = 
	\uparrow_{i+1}(\uparrow_{i+2}(\dots(\uparrow_{n-1}(c))\dots)),
\end{equation}
 is well-defined, then $\uparrow_{i}(c')$ is well-defined.
\end{Lemma}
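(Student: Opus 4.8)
The plan is to exhibit explicitly an element of $\cc$ lying strictly above $c'$ and differing from it only in coordinate $i$; by Definition~\ref{augMin} this is exactly what ``$\uparrow_i(c')$ is well-defined'' means. Since $\uparrow_{i+1},\dots,\uparrow_{n-1}$ modify only coordinates of index $>i$, the tuple $c'$ agrees with $c$ on coordinates $1,\dots,i$; in particular $c'_i=c_i$. If $c_i<0$ one is done immediately: then $c'_i\ne 0$, and Lemma~\ref{lemmeutilepartout} applied to $c'$ produces a cubic coordinate obtained from $c'$ by resetting coordinate $i$ to $0>c'_i$. So assume $c_i\ge 0$ and write $(u,v)=\phi(c)$, $(u',v')=\phi(c')$. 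The features that will be used are: $(u',v')$ is a Tamari interval diagram; $u'_k=u_k$ for $k\le i$ and $v'_k=v_k$ for $k\le i+1$ (so $u'_i=u_i=c_i$ and $v'_{i+1}=0$); and $v'_q\le v_q$ for $q>i+1$, because each $\uparrow_j$ raises a coordinate of index $>i$.

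Set $N=n-i$, let $L$ be the minimum of $u_k-(i-k)$ over $1\le k<i$ with $u_k\ge i-k$ (and $L=+\infty$ if there is no such $k$), let $Q'$ be the largest $m'\ge 0$ with $v'_q<q-i$ for all $i+1<q\le i+m'$, and put $m=\min(L,N,Q')$. I claim that the tuple $d$ obtained from $c'$ by replacing its $i$-th coordinate by $m$ is a cubic coordinate; since $d_i=m$ and $d_j=c'_j$ for $j\ne i$, it then suffices to also check $m>c'_i=u_i$. For this: $c$ is minimal-cellular, so $\uparrow_i(c)$ is well-defined, i.e.\ $u_i$ can be strictly increased in $(u,v)$ while keeping every other letter; reading off the axioms of Definitions~\ref{diagTam} and~\ref{compatibleDT} that involve $u_i$ forces $u_i<n-i=N$, forces $u_i<u_k-(i-k)$ for every admissible $k<i$ (so $u_i<L$), and — using the compatibility axiom of $(u,v)$ together with $v'\le v$ — forces $u_i<Q'$; hence $m>u_i$.

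It then remains to verify that $(\tilde u,\tilde v):=\phi(d)$ is a Tamari interval diagram, where $\tilde u$ coincides with $u'$ except $\tilde u_i=m$ and $\tilde v=v'$ (as $m\ge 1$). Every condition of Definitions~\ref{diagTam},~\ref{diagTamDual},~\ref{compatibleDT} is inherited from the validity of $(u',v')$ except three: the instances of condition~\ref{condDT2} of Definition~\ref{diagTam} reaching $\tilde u_i$ from a position $k<i$, i.e.\ $m\le u_k-(i-k)$ for admissible $k$, which hold since $m\le L$; the compatibility instances at $p=i$, i.e.\ $v'_q<q-i$ for $i<q\le i+m$, which hold since $m\le Q'$ (the case $q=i+1$ being $v'_{i+1}=0<1$); and condition~\ref{condDT2} at position $i$ itself, i.e.\ $u'_{i+l}\le m-l$ for $1\le l\le m$. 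This last inequality is the only real work. For $1\le l\le u_i$ it is immediate from condition~\ref{condDT2} at position $i$ for $(u',v')$, since $u'_{i+l}\le u'_i-l=u_i-l<m-l$. For $u_i<l\le m$ I would split according to which term attains $m=\min(L,N,Q')$: if $m=N$, then $u'_{i+l}\le n-(i+l)=m-l$ by condition~(i) of Definition~\ref{diagTam} for $u'$ (with $u'_n=0$ when $l=n-i$); if $m=L$ is finite, choose $k_0<i$ with $u_{k_0}-(i-k_0)=L$ and apply condition~\ref{condDT2} at $k_0$ in $(u',v')$ with index $j=(i-k_0)+l$ (legitimate since $l\le L$), obtaining $u'_{i+l}\le u'_{k_0}-j=u_{k_0}-(i-k_0)-l=m-l$; if $m=Q'<N$, put $q_0=i+Q'+1\le n$, note that maximality of $Q'$ forces $v'_{q_0}\ge q_0-i$, and deduce from the compatibility of $(u',v')$ that $u'_p<q_0-p$ for every $p\ge i$ (otherwise $v'_{q_0}<q_0-p\le q_0-i\le v'_{q_0}$), so $u'_{i+l}\le q_0-(i+l)-1=m-l$.

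In every case $u'_{i+l}\le m-l$, so $d\in\cc$ and $\uparrow_i(c')$ is well-defined. I expect the main obstacle to be the bookkeeping of the last step: one must respect simultaneously the left-hand constraint (the bound $L$, which is unchanged from $c$), the ambient bound $N$, and the compatibility bound $Q'$ (which can only have improved under the $\uparrow_j$), and show in each of the three regimes that the possibly-enlarged letters $u'_{i+l}$ still fit under the staircase $m-l$. The choice $m=\min(L,N,Q')$ is exactly what makes all three requirements hold at once; the right-to-left order enters only through the monotonicity $v'\le v$ and the fact that coordinates $\le i$ are untouched.
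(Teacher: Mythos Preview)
The paper, being an extended abstract, states this lemma without proof, so there is no paper argument to compare against. Your proof is correct: you construct explicitly a cubic coordinate $d$ strictly above $c'$ differing only at index $i$, by taking $d_i=\min(L,N,Q')$ where $L$, $N$, $Q'$ encode precisely the three obstructions to raising the $i$-th letter (the staircase constraint~\ref{condDT2} from positions $k<i$, the size bound, and the compatibility constraint on the right). The structural point that makes the transfer from $c$ to $c'$ work is exactly the one you isolate at the end: $L$ is unchanged because it depends only on $u_k$ with $k<i$, which the $\uparrow_j$ do not touch, while $Q'$ can only loosen because $v'\le v$ entrywise.

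Two small remarks. First, when you argue $u_i<Q'$, what you are really using is the compatibility axiom for the \emph{increased} pair witnessing $\uparrow_i(c)$, not for $(u,v)$ itself: the existence of some $u''_i>u_i$ with the modified diagram valid forces $v_{i+u_i+1}<u_i+1$ (provided $i+u_i+1\le n$, which follows from $u''_i\le n-i$), and then $v'\le v$ gives $Q'\ge u_i+1$. Your phrasing ``the compatibility axiom of $(u,v)$'' slightly obscures this. Second, your preliminary treatment of the range $1\le l\le u_i$ is redundant: each branch of the case split on $m$ already yields $u'_{i+l}\le m-l$ for all $0\le l\le m$. Neither point affects correctness.
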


\begin{definition}\label{defMax}
Let $c$ be a minimal-cellular cubic coordinate of size $n$ and let $c'$ be a cubic coordinate of size $n$. We set that $c'$ is the \Def{maximal-cellular correspondent} of $c$ if 
\begin{equation}
c' = \uparrow_{1}(\uparrow_{2}(\dots(\uparrow_{n-1}(c))\dots)).
\end{equation}
\end{definition}

For instance, $c = (0,-1,1,-1,-5,0,1,-1,-3)$ is minimal-cellular, and its maximal-cellular correspondent is $c' = (1,0,2,0,-4,3,2,0,-2)$.
Such an element always exists, by Lemma~\ref{existeMax}. Note that by performing minimal increases in another order does not always lead to the maximal-cellular correspondent (see Figure~\ref{RealCube} for example).

\begin{definition}
Let $c^{m}$ be minimal-cellular cubic coordinate of size $n$ and let $c^{M}$ be its maximal-cellular correspondent. The pair $(c^{m},c^{M})$ is a \Def{cell}, denoted by $\langle c^{m},c^{M} \rangle$. The size of the cell is the size of $c^{m}$.
\end{definition}

\begin{Lemma}\label{lemmequifaitgagnerdutps}
Let $\langle c^{m},c^{M} \rangle$ be a cell of size $n$ and $i\in [n-1]$, 
\begin{enumerate}[label=(\roman*)]
\item if $c^{m}_{i}<0$ then $c^{M}_{i}\leq 0$;
\item if $c^{m}_{i}\geq 0$ then $c^{M}_{i}>0$.
\end{enumerate}
\end{Lemma}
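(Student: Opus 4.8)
The plan is to reduce both items to two facts already available in the excerpt: the description of $\uparrow_i$ as the \emph{minimal} admissible increase, and Lemma~\ref{lemmeutilepartout}, which supplies $0$ as an admissible target for an entry that is currently nonzero. The first move is a bookkeeping step. Fix the index $i$ and set
$c' = \uparrow_{i+1}(\uparrow_{i+2}(\dots(\uparrow_{n-1}(c^{m}))\dots))$; since $c^{m}$ is minimal-cellular, iterating Lemma~\ref{existeMax} shows that $c'$, and then $\uparrow_{i}(c')$, are well-defined, and by Definition~\ref{defMax} we have $c^{M} = \uparrow_{1}(\dots(\uparrow_{i-1}(\uparrow_{i}(c')))\dots)$. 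Each operator $\uparrow_{k}$ modifies only the $k$-th coordinate; hence the minimal increases with index $>i$ leave the $i$-th entry untouched, so $c'_{i} = c^{m}_{i}$, and the ones with index $<i$ leave it untouched as well, so $c^{M}_{i} = \bigl(\uparrow_{i}(c')\bigr)_{i}$. Everything thus reduces to comparing $\bigl(\uparrow_{i}(c')\bigr)_{i}$ with $c^{m}_{i} = c'_{i}$.

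Next I would make precise that $\bigl(\uparrow_{i}(c')\bigr)_{i}$ is the smallest integer $m > c'_{i}$ such that replacing the $i$-th entry of $c'$ by $m$ yields a cubic coordinate: this follows from Definition~\ref{augMin} together with the characterization of the covering relation $\lessdot$ recalled in Section~\ref{Part4}, because any strictly smaller admissible $m$ would insert a cubic coordinate strictly between $c'$ and $\uparrow_{i}(c')$. With this identification both items are immediate. For (ii), if $c^{m}_{i} \geq 0$ then $c^{M}_{i} = \bigl(\uparrow_{i}(c')\bigr)_{i} > c'_{i} = c^{m}_{i} \geq 0$, so $c^{M}_{i} > 0$. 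For (i), if $c^{m}_{i} < 0$ then $c'_{i} = c^{m}_{i} \neq 0$, so by Lemma~\ref{lemmeutilepartout} the tuple obtained from $c'$ by setting its $i$-th entry to $0$ is a cubic coordinate; since $0 > c'_{i}$, the value $0$ belongs to the set of admissible increases, whence $c^{M}_{i} = \bigl(\uparrow_{i}(c')\bigr)_{i} \leq 0$.

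The only delicate point is the bookkeeping in the first step: one must check that the iterated minimal increases defining $c^{M}$ are each applied to a well-defined argument (which is exactly what Lemma~\ref{existeMax} guarantees for a minimal-cellular $c^{m}$) and that the $i$-th coordinate is genuinely frozen both before $\uparrow_{i}$ acts and afterwards. Once this is in place, the identification of $\uparrow_{i}$ with a minimum and the appeal to Lemma~\ref{lemmeutilepartout} are routine, and no manipulation of the defining inequalities of Tamari interval diagrams is needed.
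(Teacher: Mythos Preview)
Your argument is correct. The paper, being an extended abstract, omits the proof of this lemma entirely, so there is no paper proof to compare against; your route via the bookkeeping reduction $c^{M}_{i}=(\uparrow_{i}(c'))_{i}$ with $c'_{i}=c^{m}_{i}$, followed by the identification of $\uparrow_{i}$ with the least admissible increase and the appeal to Lemma~\ref{lemmeutilepartout}, is a clean and complete justification.
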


\begin{Theorem}\label{nombredesommetdanscellule}
Let $\langle c^{m},c^{M} \rangle$ be a cell of size $n$. Let $c$ be any $(n-1)$-tuple whose entries $c_i$ are equal to $c^{m}_i$, or to $c^{M}_i$, for all $i\in [n-1]$. Then $c$ is a cubic coordinate.
\end{Theorem}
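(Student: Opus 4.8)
The plan is to unwind the definition: writing $\phi(c)=(u,v)$, I must show that $(u,v)$ is a Tamari interval diagram, i.e. that it satisfies Definitions~\ref{diagTam},~\ref{diagTamDual} and~\ref{compatibleDT}; by Theorem~\ref{bijDIT-CC} this is exactly what it means for $c$ to be a cubic coordinate. Since $c^M=\uparrow_1(\uparrow_2(\cdots\uparrow_{n-1}(c^m)\cdots))$ and each $\uparrow_k$ modifies only coordinate $k$, I set $d^{(k)}=\uparrow_k(\uparrow_{k+1}(\cdots\uparrow_{n-1}(c^m)\cdots))$ for $1\le k\le n-1$ and $d^{(n)}=c^m$; by Lemma~\ref{existeMax} each $d^{(k)}$ is a well-defined cubic coordinate, the first $k-1$ entries of $d^{(k)}$ coincide with those of $c^m$ and the last $n-k$ with those of $c^M$, and $c^m_k<c^M_k$ for every $k$. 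By Lemma~\ref{lemmequifaitgagnerdutps}, $[n-1]$ partitions into $I^{+}=\{k:c^m_k\ge 0\}=\{k:c^M_k>0\}$ and $I^{-}=\{k:c^m_k<0\}=\{k:c^M_k\le 0\}$. Writing $(u^m,v^m)=\phi(c^m)$ and $(u^M,v^M)=\phi(c^M)$, this means that for $k\in I^{+}$ one has $c^m_k,c^M_k\ge 0$, hence $u^m_k\le u_k\le u^M_k$ and $v_{k+1}=0$, while for $k\in I^{-}$ one has $c^m_k,c^M_k\le 0$, hence $u_k=0$ and $v^M_{k+1}\le v_{k+1}\le v^m_{k+1}$; in particular $u$ lies coordinatewise between $u^m$ and $u^M$, and $v$ between $v^M$ and $v^m$.

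Next I would verify Definitions~\ref{diagTam} and~\ref{diagTamDual}. The bounds $0\le u_i\le n-i$ and $0\le v_i\le i-1$ are immediate, $u$ and $v$ being squeezed coordinatewise between words satisfying them. For~\ref{condDT2}, fix $i$ and $0\le j\le u_i$. If $u_i=u^M_i$, then $u_{i+j}\le u^M_{i+j}\le u^M_i-j$ by~\ref{condDT2} for $u^M$. If $u_i=u^m_i$ (so $i\in I^{+}$), the subcase $u_{i+j}=u^m_{i+j}$ follows from~\ref{condDT2} for $u^m$, while if $u_{i+j}=u^M_{i+j}$ one observes that $\phi(d^{(i+1)})$ has $u$-letter $u^m_i$ in position $i$ and $u^M_{i+j}$ in position $i+j$, so~\ref{condDT2} applied to the Tamari interval diagram $\phi(d^{(i+1)})$ gives $u^M_{i+j}\le u^m_i-j$. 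Condition~\ref{condDTD2} is treated symmetrically, using $d^{(i-1)}$ in the corresponding subcase. For compatibility, fix $1\le i<j\le n$ with $j-i\le u_i$; then $u_i\ge 1$, so $i\in I^{+}$. If $j-1\in I^{+}$ then $v_j=0<j-i$. If $j-1\in I^{-}$ and $v_j=v^M_j$: when $u_i=u^M_i$, compatibility of $\phi(c^M)$ applies directly; when $u_i=u^m_i$, compatibility of $\phi(c^m)$ gives $v^m_j<j-i$, and $v^M_j\le v^m_j$. If $j-1\in I^{-}$, $v_j=v^m_j$ and $u_i=u^m_i$, compatibility of $\phi(c^m)$ again applies. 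The one remaining case is $u_i=u^M_i=c^M_i$, $v_j=v^m_j$, $j-1\in I^{-}$ and $c^m_i<j-i\le c^M_i$; writing $p=j-1$, this reads $p\in I^{-}$ and $i+c^m_i\le p\le i+c^M_i-1$, and the goal becomes $c^m_p\ge i-p$.

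This last case is where I expect the real work, and it is the only place where minimal-cellularity of $c^m$ and the covering nature of the increases defining $c^M$ are genuinely used. First I would record the following consequence of minimal-cellularity: if $i'<q$ and $c^m_{i'}\ge q-i'$, then $c^m_q\ge i'-q$. Otherwise $c^m_q\le i'-q-1$, so the $v$-letter of $\phi(c^m)$ at position $q+1$ equals $-c^m_q\ge q+1-i'$; but any tuple obtained from $c^m$ by raising only coordinate $i'$ has $i'$-th entry $\ge c^m_{i'}+1\ge q+1-i'$ and the same $v$-letter $\ge q+1-i'$ at position $q+1$, so it violates Definition~\ref{compatibleDT} and hence is not a cubic coordinate; thus $\uparrow_{i'}(c^m)$ is not well-defined, contradicting that $c^m$ is minimal-cellular. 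It then suffices to exhibit $i'\in[i,p)$ with $c^m_{i'}\ge p-i'$, since the statement just proved, with $q=p$, then gives $c^m_p\ge i'-p\ge i-p$.

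To produce such an $i'$, suppose none exists, so $c^m_{i'}\le p-i'-1$ for all $i'\in[i,p)$. Starting from $r_0=i$ (for which $c^M_{r_0}\ge p+1-r_0$, since $p\le i+c^M_i-1$), I would build an infinite strictly increasing sequence $r_0<r_1<r_2<\cdots$ inside $[i,p)$ as follows. Given $r_t\in[i,p)$ with $c^M_{r_t}\ge p+1-r_t$, the value $h=p-r_t$ satisfies $c^m_{r_t}<h<c^M_{r_t}$, so the tuple obtained from $d^{(r_t)}$ by lowering coordinate $r_t$ to $h$ lies strictly between the cubic coordinates $d^{(r_t+1)}$ and $d^{(r_t)}$ in $(\cc,\le_{\mathrm{cc}})$; since $d^{(r_t+1)}\lessdot d^{(r_t)}$, it is not a cubic coordinate, and as it differs from the Tamari interval diagram $\phi(d^{(r_t)})$ only by lowering one positive $u$-letter, a direct check shows the only axiom it can fail is~\ref{condDT2} with base index $r_t$; this yields $k\in[1,p-r_t]$ with $c^M_{r_t+k}\ge p+1-(r_t+k)$. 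Positivity forces $r_t+k\in I^{+}$, and $r_t+k\ne p$ because $p\in I^{-}$, so $r_{t+1}:=r_t+k\in[r_t+1,p-1]\subseteq[i,p)$ satisfies $c^M_{r_{t+1}}\ge p+1-r_{t+1}$, and the construction continues. This contradicts the finiteness of $[i,p)$, so the required $i'$ exists and the proof is complete. Everything outside this crossed compatibility case reduces, through the intermediate cubic coordinates $d^{(k)}$, to the defining conditions for $c^m$ and $c^M$, which hold by hypothesis; the crossed case, with its auxiliary lemma and non-terminating descent, is the main obstacle.
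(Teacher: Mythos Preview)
The paper is an extended abstract and explicitly omits the proof of Theorem~\ref{nombredesommetdanscellule}; it is merely invoked in the proof of Theorem~\ref{bigGam}. There is therefore no argument in the paper to compare yours against.

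Your proof is correct. The overall architecture---introduce the intermediate cubic coordinates $d^{(k)}$, use Lemma~\ref{lemmequifaitgagnerdutps} to split indices into $I^{+}$ and $I^{-}$, and then verify the three axiom packages---is the natural one, and all the easy cases are handled cleanly by squeezing $u$ and $v$ between the corresponding words of $\phi(c^{m})$, $\phi(c^{M})$, or an appropriate $\phi(d^{(k)})$. Your identification of the single nontrivial compatibility case (namely $u_i=u^{M}_i$, $v_j=v^{m}_j$, $j-1\in I^{-}$, $c^{m}_i<j-i\le c^{M}_i$) is accurate: every other case reduces directly to compatibility of $\phi(c^{m})$ or $\phi(c^{M})$.

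The two auxiliary steps you use for that case also check out. First, your observation that minimal-cellularity of $c^{m}$ forces $c^{m}_{i'}\ge q-i'\Rightarrow c^{m}_q\ge i'-q$ is correct: raising only coordinate~$i'$ in $c^{m}$ would then violate Definition~\ref{compatibleDT} at the pair $(i',q+1)$, so $\uparrow_{i'}(c^{m})$ would be undefined. Second, the descent producing $r_0<r_1<\cdots$ in $[i,p)$ is sound: at each step, lowering coordinate~$r_t$ of $d^{(r_t)}$ to $h=p-r_t$ yields a tuple strictly between $d^{(r_t+1)}$ and $d^{(r_t)}=\uparrow_{r_t}(d^{(r_t+1)})$, hence not a cubic coordinate; the only axiom that can fail after lowering a single positive $u$-letter is condition~\ref{condDT2} of Definition~\ref{diagTam} with base index~$r_t$, and the resulting witness index $r_t+k$ satisfies $c^{M}_{r_t+k}\ge p+1-(r_t+k)>0$, forcing $r_t+k\in I^{+}$ and hence $r_t+k<p$. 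The infinite strictly increasing sequence in the finite interval $[i,p)$ gives the desired contradiction.

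Two small remarks. In the verification of condition~\ref{condDT2} for $u$, when you invoke $d^{(i+1)}$ you are implicitly using $j\ge 1$; the case $j=0$ is trivial. Likewise, in the dual verification with $d^{(i-1)}$ the implicit assumption is $j\ge 1$. You might state this explicitly. Also, in the descent step it is worth noting once that $h=p-r_t\ge 1$, so the lowered value is still strictly positive and the $v$-part is indeed unchanged; you do use this but do not say it.
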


One of the consequences of Theorem~\ref{nombredesommetdanscellule} is that for every cell, 
we have at least $2^{n-1}$ cubic coordinates between $c^{m}$ and $c^{M}$. There can be strictly more.
Now, since we have a definition of cells, we show that from a cell, we can build a synchronized cubic coordinate. 

Consider $\langle c^{m},c^{M} \rangle$ a cell of size $n$. Let $\gamma$ be the map defined for all $i\in [n-1]$ by
\begin{equation}
\gamma (c^{m}_{i}, c^{M}_{i}) = 
\begin{cases}
    c^{m}_{i} & \mbox{ if } c^{m}_{i}<0,\\
    c^{M}_{i} & \mbox{ if } c^{m}_{i}\geq 0.
\end{cases}
\end{equation}

Let $\Gamma$ be the map from the set of cells of size $n$ to the set of $(n-1)$-tuples defined by
\begin{align}
\Gamma (\langle c^{m},c^{M} \rangle) = (\gamma(c^{m}_{1}, c^{M}_{1}), \gamma(c^{m}_{2}, c^{M}_{2}),\dots,\gamma(c^{m}_{n-1}, c^{M}_{n-1})).
\end{align}

For example, the cell $\langle (0,-1,1,-1,-5,0,1,-1,-3),(1,0,2,0,-4,3,2,0,-2) \rangle$ is sent to $(1,-1,2,-1,-5,3,2,-1,-3)$.

\begin{Theorem}\label{bigGam}
The application $\Gamma$ is a bijection from the set of cells of size $n$ to $\ccs$.
\end{Theorem}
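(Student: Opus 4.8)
The plan is to show in turn that $\Gamma$ is well defined, then to build an explicit inverse, and finally to check that the two composites are the identity; throughout, the engine is the sign dichotomy of Lemma~\ref{lemmequifaitgagnerdutps} together with Theorem~\ref{nombredesommetdanscellule}.

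\emph{Well-definedness.} For a cell $\langle c^{m},c^{M}\rangle$, each entry of $\Gamma(\langle c^{m},c^{M}\rangle)$ equals $c^{m}_{i}$ or $c^{M}_{i}$, so $\Gamma(\langle c^{m},c^{M}\rangle)$ is a cubic coordinate by Theorem~\ref{nombredesommetdanscellule}. It has no zero entry: if $c^{m}_{i}<0$ its $i$-th entry is $c^{m}_{i}\ne 0$; if $c^{m}_{i}\geq 0$ its $i$-th entry is $c^{M}_{i}$, which is positive by item~(ii) of Lemma~\ref{lemmequifaitgagnerdutps}. Hence $\Gamma$ takes values in $\ccs$.

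\emph{An explicit inverse.} Since the maximal-cellular correspondent $c^{M}$ is a function of $c^{m}$ (Definition~\ref{defMax}, Lemma~\ref{existeMax}), a cell is determined by its minimum $c^{m}$, which is a minimal-cellular cubic coordinate; so, given $d\in\ccs$, it is enough to reconstruct such a $c^{m}$ with $\Gamma(\langle c^{m},c^{M}\rangle)=d$. The well-definedness analysis already forces $c^{m}_{i}=d_{i}$ where $d_{i}<0$, and $c^{m}_{i}\geq 0$ together with $c^{M}_{i}=d_{i}$ where $d_{i}>0$. The local tool is this: fixing every coordinate but the $i$-th, the set of integers for which the tuple is a cubic coordinate is nonempty --- it contains $0$, by Lemma~\ref{lemmeutilepartout} --- so in the cubic realization each vertex has, in direction $i$, a well-defined immediate predecessor and immediate successor whenever these exist, and $\uparrow_{i}$ (Definition~\ref{augMin}) is the step to the immediate successor. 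With $c^{(n)}=c^{m}$, $c^{(k)}=\uparrow_{k}(c^{(k+1)})$, $c^{(1)}=c^{M}$, the value $c^{M}_{k}=c^{(k)}_{k}$ is the immediate successor of $c^{m}_{k}$ in direction $k$ in the tuple whose entries of index $>k$ are those of $c^{M}$ and whose entries of index $<k$ are those of $c^{m}$. I would prove that this successor depends only on the entries of index $>k$; granting that, $c^{m}$ is recovered by running $k=n-1,n-2,\dots,1$, at each step using the already-computed $c^{M}_{j}$ for $j>k$ and putting $c^{m}_{k}=d_{k}$ when $d_{k}<0$, or $c^{m}_{k}$ equal to the immediate predecessor of $d_{k}$ in direction $k$ when $d_{k}>0$ (necessarily in $\{0,1,\dots,d_{k}-1\}$, as $0$ is admissible). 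This produces a map $\Delta\colon\ccs\to\{\text{cells}\}$.

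\emph{Conclusion and main obstacle.} It then remains to check that the tuple $c^{m}$ output by $\Delta(d)$ is minimal-cellular, that its maximal-cellular correspondent satisfies $c^{M}_{i}=d_{i}$ exactly where $d_{i}>0$ and $c^{M}_{i}\leq 0$ elsewhere --- whence $\Gamma(\Delta(d))=d$ --- and that $\Delta(\Gamma(\langle c^{m},c^{M}\rangle))=\langle c^{m},c^{M}\rangle$; the last two are routine once the reconstruction is well-posed (predecessor and successor being mutually inverse in a set of integers). The hard part is precisely this well-posedness: showing that the immediate successor of one coordinate is independent of the lower-indexed coordinates --- so the cascade defining $c^{M}$ inverts coordinate by coordinate --- and that the recovered tuple is minimal-cellular. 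This calls for a careful case analysis of conditions~(i)--(ii) of Definitions~\ref{diagTam} and~\ref{diagTamDual} and of Definition~\ref{compatibleDT}, establishing that along a minimal increase $\uparrow_{i}$ the binding inequality is always a forward Tamari or dual-Tamari inequality, a size (box) inequality, or a compatibility inequality, and never a backward Tamari inequality relating $u_{i}$ to some $u_{k}$ with $k<i$. That is where the substance of the theorem lies.
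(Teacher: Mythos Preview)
Your well-definedness paragraph is correct and matches the paper verbatim: Theorem~\ref{nombredesommetdanscellule} gives that $\Gamma(\langle c^{m},c^{M}\rangle)\in\cc$, and Lemma~\ref{lemmequifaitgagnerdutps} forces every entry to be nonzero.

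After that, however, your strategy diverges from the paper's and leaves a genuine gap. The paper does \emph{not} construct an explicit inverse. Instead it proves injectivity of $\Gamma$ directly---assuming $\Gamma(\langle c^{m},c^{M}\rangle)=\Gamma(\langle e^{m},e^{M}\rangle)$ and deriving $c^{m}=e^{m}$ coordinate by coordinate via a contradiction with Definition~\ref{augMin}---and then concludes bijectivity by a cardinality argument: minimal-cellular cubic coordinates are exactly those covered by $n-1$ elements in $(\cc,\leq_{\mathrm{cc}})$; through the poset isomorphism $\psi$ these correspond to Tamari intervals with $n-1$ upper covers, and Chapoton has shown that those are equinumerous with synchronized intervals. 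This sidesteps entirely the reconstruction you attempt.

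Your proposed route is not merely different but incomplete, and you say so yourself (``I would prove that\dots'', ``granting that\dots'', ``That is where the substance of the theorem lies''). More seriously, the key lemma you isolate---that the value of $\uparrow_{i}$ depends only on entries of index $>i$---is false as stated. When $c_{i}<0$, increasing $c_{i}$ means decreasing $v_{i+1}$, and condition~\ref{condDTD2} of Definition~\ref{diagTamDual} says that for each $k<i+1$ with $v_{k}>0$ the value $v_{i+1}$ is forbidden from the interval $[(i+1)-k,\,(i+1)-k+v_{k}-1]$; these forbidden intervals are governed by the \emph{lower}-indexed dual Tamari letters, hence by $c_{k-1}$ with $k-1<i$. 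So the immediate successor in direction $i$ can genuinely depend on entries to the left, and your reconstruction $\Delta$ is not well-posed as described. The paper's injectivity argument also has to wrestle with the interaction between the cascade of $\uparrow_{k}$'s and the coordinate under scrutiny, but it does so concretely (exhibiting a forbidden intermediate cubic coordinate) rather than via a structural independence claim; and the surjectivity, which is the truly hard half of your approach, it gets for free from the external counting result.
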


\begin{proof}
The entries of $\Gamma (\langle c^{m},c^{M} \rangle)$ belong either to $c^{m}$ or to $c^{M}$. In both cases, the entries are not zero. Hence, by Theorem~\ref{nombredesommetdanscellule}, $\Gamma (\langle c^{m},c^{M} \rangle)$ is a cubic coordinate of size~$n$. Additionally, by Definition~\ref{ditSync} and Lemma~\ref{lemmequifaitgagnerdutps}, this cubic coordinate is synchronized.

Let us first show that $\Gamma$ is injective.
Let $\langle c^{m},c^{M} \rangle$ and $\langle e^{m},e^{M} \rangle$ be two cells of size $n$, such that $\Gamma (\langle c^{m},c^{M} \rangle) = \Gamma (\langle e^{m},e^{M} \rangle)$.
Then let $(u^j_i,v^j_{i+1})$ and $(x^j_i,y^j_{i+1})$ be the two pairs of letters corresponding respectively to $c^j_i$ and to $e^j_i$ by $\phi$ with $j\in \{m,M\}$ and $i\in[n-1]$.

By hypothesis, $\gamma (c^{m}_i, c^{M}_i) = \gamma (e^{m}_i, e^{M}_i)$ for all $i\in[n-1]$.
We need to prove that $c^j_i = e^j_i$ for all $i\in[n-1]$ with $j\in \{m,M\}$. Thus, two cases are possible:

\begin{enumerate}[label={(\arabic*)}]
\item \label{injGam1} either $\gamma (c^{m}_i, c^{M}_i) = u^{M}_{i}$. In this case, $\gamma (e^{m}_i, e^{M}_i) = x^{M}_{i}$ and $u^{M}_{i} = x^{M}_{i}$;

\item \label{injGam2} or $\gamma (c^{m}_i, c^{M}_i) = -v^{m}_{i+1}$. Then, $\gamma (e^{m}_i, e^{M}_i) = -y^{m}_{i+1}$ and $v^{m}_{i+1} = y^{m}_{i+1}$.
\end{enumerate}

\begin{itemize}
\item Let us assume that~\ref{injGam1} holds. Since $u^{M}_i \ne 0$, by Definition~\ref{compatibleDT}, $v^{M}_{i+1} = 0$. Likewise, $x^{M}_i \ne 0$ implies that $y^{M}_{i+1} = 0$. Hence $c^{M}_i = e^{M}_i$.

\noindent
By Lemma~\ref{lemmequifaitgagnerdutps}, if $u^{M}_i \ne 0$ (resp. $x^{M}_i \ne 0$) then  $0\leq u^{m}_i < u^{M}_i$ and $v^{m}_{i+1} = 0$ (resp. $0\leq x^{m}_i < x^{M}_i$ and $y^{m}_{i+1} = 0$). It remains only to show that $u^{m}_i = x^{m}_i$. 
Suppose that $u^{m}_i < x^{m}_i$, and 
let $c =\uparrow_{i+1}(\dots (\uparrow_{n-1}(c^{m}))\dots)$ and $e = \uparrow_{i+1}(\dots (\uparrow_{n-1}(e^{m}))\dots)$. By Lemma~\ref{existeMax}, $c$ and $e$ are cubic coordinates. Therefore, by Definition~\ref{defMax}, $c_{k}= c^{M}_k$ and $e_{k} = e^{M}_k$ for all $i+1\leq k\leq n-1$. By hypothesis, if $c_k$ is positive, then $c_k = u^{M}_k$ and since in this case $u^{M}_k = x^{M}_k$, one has $e_k = x^{M}_k$.
Let $c'$ be a $(n-1)$-tuple such that $c'_i = x^{m}_i$ and $c'_j = c_j$ for any $j\ne i$. By hypothesis, we also know that $\widehat{c}_i = u^{M}_i$, $\widehat{e}_i = x^{M}_i$, and $u^{M}_i = x^{M}_i$. Since $x^{m}_i < x^{M}_i$, then $x^{m}_i < u^{M}_i$. Hence $u^{m}_i < x^{m}_i < u^{M}_i$. 
Let $(u,v)$ and $(u',v')$ be the two pairs of words corresponding respectively with $c$ and $c'$.
The $(n-1)$-tuple $c'$ is a cubic coordinate. Indeed, since $v=v'$ and $c$ is a cubic coordinate, $v'$ is a dual Tamari diagram. Therefore, since $e$ is a cubic coordinate, $c'_k = e_k$ if $c'_k$ is positive for all $i\leq k\leq n-1$, and one has that $\uparrow_i (c)$ is also a cubic coordinate, then $u'$ is a Tamari diagram. Finally, since $\uparrow_i (c)\in\cc$, we can conclude that Definition~\ref{compatibleDT} is satisfied and $c'$ is a cubic coordinate. 
It leads us to the fact that there is a cubic coordinate $c'$ distinct from $c$ and $\uparrow_i (c)$ such that $c \leq_{\mathrm{cc}} c' \leq_{\mathrm{cc}} \uparrow_i (c)$, which is impossible by Definition~\ref{augMin}. 
Whence $c^{m}_i = e^{m}_i$.

\item Let us suppose that~\ref{injGam2} holds. We show that $c^{m}_i = e^{m}_i$ and $c^{M}_i = e^{M}_i$ in the same way as~\ref{injGam1}, by reformulating the arguments for the dual case. 
\end{itemize}

By definition of $\gamma$, we cannot have any other case. Therefore $\Gamma$ is injective.

Now, let us show that the cardinality of the set of cells of size $n$ is equal to the cardinality of $\ccs$. Recall that the set of cells of size $n$ is exactly the set of minimal-cellular cubic coordinates of size $n$. Moreover, this is also the set of cubic coordinates which are covered by exactly $n-1$ elements in $(\cc, \leq_{\mathrm{cc}})$. Besides, by the isomorphism of posets $\psi$, we know that these elements correspond to the ones with the same property in the poset of Tamari intervals.
In~\cite{Cha17}, Chapoton shows that the set of these elements have the same cardinality as the set of synchronized Tamari intervals (see Theorem 2.1 and Theorem 2.3 there). By Proposition~\ref{ditsyn=tisyn}, it may be inferred that the cardinality of $\ccs$ and the cardinality of the set of cells of size $n$ are equal.

One can conclude that $\Gamma$ is bijective.
\end{proof}



\bibliographystyle{plain}
\bibliography{Bibliography}

\end{document}